\documentclass[12pt,reqno]{amsart}
\usepackage{amsmath, amssymb, amsfonts}
\numberwithin{equation}{section}
\newtheorem{thm}{Theorem}[section]
\newtheorem{prop}[thm]{Proposition}
\newtheorem{lem}[thm]{Lemma}
\newtheorem{dfn}[thm]{Definition}
\newtheorem{remark}[thm]{Remark}

\usepackage[a4paper, top = 1.2in, bottom = 1.2in, left = 1.2in, right = 1.2in]{geometry}
\numberwithin{equation}{section}

\newcommand{\A}{\mathbb{A}}
\newcommand{\C}{\mathbb{C}}
\newcommand{\N}{\mathbb{N}}
\newcommand{\Q}{\mathbb{Q}}
\newcommand{\R}{\mathbb{R}}

\newcommand{\Z}{\mathbb{Z}}

\newcommand{\f}{\mathbf{f}}
\newcommand{\g}{\mathbf{g}}

\newcommand{\mcO}{\mathcal{O}}

\newcommand{\mfa}{\mathfrak{a}}
\newcommand{\mfb}{\mathfrak{b}}
\newcommand{\mfc}{\mathfrak{c}}

\newcommand{\h}{\mathfrak{h}}

\newcommand{\mfm}{\mathfrak{m}}
\newcommand{\mfn}{\mathfrak{n}}

\newcommand{\Dif}{\mathfrak{D}}
\newcommand{\mfp}{\mathfrak{p}}
\newcommand{\mfq}{\mathfrak{q}}

\newcommand{\GL}{\mathrm{GL}}
\newcommand{\SO}{\mathrm{SO}}

\newcommand{\mrm}[1]{\mathrm{#1}}

\def\1{1\!\!1}

\newcommand{\pmat}[4]{ \begin{pmatrix} #1 & #2 \\ #3 & #4 \end{pmatrix}}

\def\dis{\displaystyle}

\title[On the nature of Fourier coefficients of Hilbert cusp forms]{On non-vanishing and sign changes of the Fourier coefficients of Hilbert cusp forms}

\author[T. Dalal]{Tarun Dalal}
\address[T. Dalal]{Department of Mathematics, Indian Institute of Technology Hyderabad, Kandi, Sangareddy 502285, INDIA.}
\email{ma17resch11005@iith.ac.in}

\author[N. Kumar]{Narasimha Kumar}
\address[N. Kumar]{Department of Mathematics, Indian Institute of Technology Hyderabad, Kandi, Sangareddy 502285, INDIA.}
\email{narasimha.kumar@iith.ac.in}

\keywords{Hilbert modular forms, Fourier coefficients, Sign changes, Non-vanishing}
\subjclass[2010]{Primary 11F03, 11F11, 11F30; Secondary 11F41}
\date{\today}

\begin{document}
\begin{abstract}
In this article, we study (simultaneous) non-vanishing, (simultaneous) sign changes  of Fourier coefficients of (two) Hilbert  cusp forms, respectively. 
\end{abstract}

\maketitle

\section{Introduction}
The problem of non-vanishing and sign  changes of the Fourier coefficients of modular forms over a number field is an active area of research in number theory. 
For modular forms over $\Q$, there had been extensive  study of these problems by several mathematicians (cf. ~\cite{Mur83},~\cite{KM14},~\cite{GKR15}).
For modular forms over totally real number fields, a similar study has been initiated recently in~\cite{MT14},~\cite{KK}.



In~\S\ref{Sec0}, we shall recall the definition of Hilbert modular forms, their Fourier coefficients, and we will introduce some notations.

In~\S\ref{Sec1}, we shall study the simultaneous non-vanishing of Fourier coefficients of distinct primitive forms 
at powers of prime ideals (cf. Theorem~\ref{maintheorem3} in the text). 
In~\cite{GKP}, the authors proved that if $f$ and $g$ are two Hecke eigenforms of integral weights with $a_f(n), a_g(n) \in \R$, respectively, 
then for all but finitely many primes $p$, the set $\{m\in \N \mid a_f(p^m)a_g(p^m)\neq 0 \}$ has positive density. 
In~\cite[Theorem 3.1]{KK}, the authors extended this result to Hilbert primitive forms over $K$, by showing that the set in~\eqref{non-vanishingset} 
has positive density. In this article, we improve this result by showing that this density is at least $\frac{1}{2}$, when $[K:\Q]$ is odd.
In fact, we will show that the density can either be only $\frac{1}{2}$ or $1$.
The proof of this theorem is completely different from that of Theorem in~\textit{loc. cit.}.
Our proof depends on a generalization of the lemma~\cite[Lemma 2.2]{KRW07} or~\cite[Lemma 2.5]{MM07} 
to Hilbert modular forms (cf. Proposition~\ref{1:2} in the text). 

In~\S\ref{Sec2}, we shall study the sign change results for Fourier coefficients  of primitive forms over $K$ at powers of prime ideals, 
where $[K:\Q]$ is odd. In Proposition~\ref{Sign_Changes_Prime_Fixed}, for almost all prime ideals $\mfp$, we show that 
the Fourier coefficients at $\mfp^r(r \in \N)$ change signs infinitely often.
In Theorem~\ref{Sign_Changes_exponent_Fixed}, we show that a similar result hold by fixing an exponent 
and varying over prime ideals.


Let $a_f(n),a_g(n)\in \mathbb{R}$ be the Fourier coefficients of two non-zero cusp forms $f, g$, respectively, of same level but different weights. 
In~\cite[Theorem 1]{GKR15}, the authors showed that if $a_f(1)a_g(1) \neq 0$, then 
$a_f(n)a_g(n)(n \in \N)$ change signs infinitely many often.
In~\cite[Theorem 3.1]{KK}, the authors extend this result to Hilbert modular forms. 
In this article, we have improved the conditions of theorem in~\textit{loc. cit.}, so that it can be applied to a broader class of modular forms.





\section{Preliminaries}	 
\label{Sec0}
Let $F$ be a totally real number field and let $\mcO_F$ denote the integral closure of $\Z$ inside $F$.
In this section, we shall recall the basic definition of Hilbert modular forms over $F$ and it's Fourier coefficients for all 
integral ideals $\mfm \subseteq \mcO_F$ (for more details, see~\cite{Gar90},~\cite{Fre90}).

  Let $k=(k_1,\dots, k_n)\in \N^n$. For a non-archimedean place $\mfp$ of $F$, let $F_\mfp$ denote the completion of $F$ at $\mfp$.
  Let $\Dif_F$  denote the absolute different of $F$. Let $\mfa$ and $\mfb$ be integral ideals of $F$, and define a subgroup $K_\mfp(\mfa, \mfb)$ of $\GL_2(F_\mfp)$ as
  \[ K_\mfp(\mfa, \mfb)=\left\{\left(\begin{matrix} a & b \\ c & d \end{matrix}\right)\in \GL_2(F_\mfp)\, : \, \begin{matrix} a\in \mcO_\mfp, & b\in \mfa_\mfp^{-1}\Dif_\mfp^{-1}, & \\ c\in \mfb_\mfp\Dif_\mfp, & d\in\mcO_\mfp, & |ad-bc|_\mfp=1\end{matrix}\right\},\]
  where the subscript $\mfp$ means the $\mfp$-parts of given ideals. Furthermore, we put 
  \[ K_0(\mfa, \mfb)=\SO(2)^n\cdot\prod_{\mfp<\infty}K_\mfp(\mfa, \mfb) \quad \text{and} \quad W(\mfa, \mfb)=\GL_2^+(\R)^nK_0(\mfa, \mfb).\]
  In particular, if $\mfa=\mcO_F$, then we simply write $K_\mfp(\mfb):=K_\mfp(\mcO_F, \mfb)$, $W(\mfb):=W(\mcO_F, \mfb)$, etc.
  Then, we have the following disjoint decomposition of $\GL_2(\A_F)$:
  \begin{equation}\label{eqn:decomp}
  \GL_2(\A_F)=\cup_{\nu=1}^h\GL_2(F)x_\nu^{-\iota} W(\mfb),
  \end{equation}
  where $\dis x_\nu^{-\iota} =\left(\begin{matrix} t_\nu^{-1} & \\ & 1\end{matrix}\right)$ with $\{t_\nu\}_{\nu=1}^h$ taken to be a complete set of representatives of the narrow class group of $F$. We note that such $t_\nu$ can be chosen so that the infinity part $t_{\nu, \infty}$ is $1$ for all $\nu$. For each $\nu$, we also put
  \begin{align*}
  \Gamma_\nu(\mfb) &= \GL_2(F)\cap x_\nu W(\mfb)x_\nu^{-1} \\ 
  &= \left\{ \pmat{a}{t_\mu^{-1}b}{t_\nu c}{d}\in\GL_2(F): \, \begin{matrix} a\in \mcO_\mfp, & b\in \mfa_\mfp^{-1}\Dif_\mfp^{-1}, & \\ c\in \mfb_\mfp\Dif_\mfp, & d\in\mcO_\mfp, & |ad-bc|_\mfp=1\end{matrix}\right\}. 
  \end{align*} 
  
  Let $\psi$ be a Hecke character of $\A_F^\times$ whose conductor divides $\mfb$ and $\psi_\infty$ is of the form
  \[ \psi_\infty(x)={\rm sgn}(x_\infty)^k|x_\infty|^{i\mu},\]
  with $\mu\in\R^n$ and $\sum_{j=1}^n \mu_j=0$. We let $M_k(\Gamma_\nu(\mfb), \psi_\mfb, \mu)$ denote the space of all functions $f_\nu$ that are holomorphic on $\h^n$ and 
  at cusps, satisfying
  \[ f_\nu ||_k \gamma=\psi_\mfb(\gamma)\det \gamma^{i\mu/2}f_\nu \]
  for all $\gamma$ in $\Gamma_\nu(\mfb)$. We note that such a function $f_\nu$ has a Fourier expansion
  \[ f_\nu(z)=\sum_{\xi\in F}a_\nu(\xi) \exp(2\pi i \xi z)\]
  where $\xi$ runs over all the totally positive elements in $t_\nu^{-1}\mcO_F$ and $\xi=0$. 
  A Hilbert modular form is a cusp form, if for all $\gamma \in \GL^+_2(F)$, the constant term of $f||_k\gamma$
  in its Fourier expansion is $0$, and the space of cusp forms with respect to $\Gamma_{\nu}(\mfb)$ is denoted by $M_k(\Gamma_\nu(\mfb), \psi_\mfb, \mu)$.

  Now, put $\mathbf{f}:=(f_1,\dots,f_h)$ where $f_\nu$ belongs $M_k(\Gamma_\nu(\mfb), \psi_\mfb, \mu)$ for each $\nu$, and define $\mathbf{f}$ to be a function on $\GL_2(\A_F)$ as
  \[ \mathbf{f}(g)=\mathbf{f}(\gamma x_\nu^{-\iota}w):=\psi_\mfb(w^\iota)\det w_\infty^{i\mu/2}(f_\nu||_k w_\infty)(i\!\! i)\]
  where $\gamma x_\nu^{-\iota}w\in\GL_2(F)x_\nu^{-\iota}W(\mfb)$ as in (\ref{eqn:decomp}), and  $w^\iota:=\omega_0(^t w)\omega_0^{-1}$ with $\dis \omega_0=\left(\begin{matrix} & 1 \\ -1 & \end{matrix}\right)$. 
  The space of such $\mathbf{f}$ is denoted as $M_k(\psi_\mfb, \mu)=\prod_\nu M_k(\Gamma_\nu(\mfb), \psi_\mfb, \mu)$. Furthermore, the space consisting of all  $\mathbf{f}=(f_1,\dots,f_h)\in M_k(\psi_\mfb, \mu)$ satisfying
  \[ \mathbf{f}(sg)=\psi(s)\mathbf{f}(g) \quad \text{for any}\, s\in \A_F^\times \quad \text{and}\quad x\in\GL_2(\A_F)\]
  is denoted as $M_k(\mfb,\psi)$.  If $f_\nu \in S_k(\Gamma_\nu(\mfb), \psi_\mfb, \mu)$ for each $\nu$, then the space of such $\mathbf{f}$ is denoted by $S_k(\mfb,\psi)$.
  
  Let $\mfm$ be an integral ideal of $F$ and write $\mfm=\xi t_\nu^{-1}\mcO_F$ with a totally positive element $\xi$ in $F$. Then, we define the Fourier coefficients
  of $\f$ as
  \begin{equation}\label{eqn:coeff}
  C(\mfm,\mathbf{f}) :=\begin{cases} N(\mfm)^{\frac{k_0}{2}}a_\nu(\xi)\xi^{-(k+i\mu)/2} \quad & \text{if}\quad \mfm=\xi t_\nu^{-1}\mcO_F\subset \mcO_F \\
  0 & \text{if} \quad \mfm \, \text{is not integral} \end{cases} 
  \end{equation}
  where  $k_0=\max \{k_1,\dots,k_n\}$.  
  
  Throughout this article, by a primitive form $\f$ over $F$ of level $\mfb$, with character $\chi$ and weight $k$, 
  we mean $\f$ is a normalized Hilbert Hecke eigenform in $S_k^{\mrm{new}}(\mfb,\chi)$
  (cf. for the theory of new forms, please refer to~\cite{Shi78}).
 We let $F$ (resp., $K$) to denote a totally real number field (resp., of odd degree). 
Let $\mathbf{P}$ (resp., $\mathbb{P}$) denote the set of all prime ideals of $\mcO_F$ (resp., odd inertia degree). 
We shall use the same notations  $\mathbf{P}$ (resp., $\mathbb{P}$) for prime ideals (resp., odd inertia degree) of $\mcO_K$ as well and it shall be clear from the context.

Observe that, by ramification theory, for any prime $p \in \Z$, there exists a prime ideal $\mfp \subseteq \mcO_K$ over $p$ with odd inertia degree.
Furthermore, if $K$ is Galois, then every prime ideal of $\mcO_K$ has
odd inertia degree.



\subsection{Sato-Tate equi-distribution theorem}
In this section, we shall state the Sato-Tate equi-distribution theorem for non-CM primitive forms $\f$ (cf.~\cite[Theorem 3.3]{KKT18} which is a
re-formulation of~\cite[Corollary 7.17]{BGG11} for $\f$) 
in a way that shall be useful in our context.

Let $\f$ be a primitive form over $F$ of level $\mfc$, with trivial character and weight $2k$.
For any ideal $\mfa \subseteq \mcO_F$, define $\beta(\mfa,\f) := \frac{C(\mfa,\f)}{{N(\mfa)}^\frac{2k_0-1}{2}}$.
By Deligne's bound for $\f$, for any prime ideal $\mfp \nmid \mfc \Dif_F$, we have 
$\beta(\mfp,\f) \in [-2,2]$. Hence,
we can write 
\begin{equation} \label{theta}
  \beta(\mfp,\f) =  2 \cos \theta_\mfp(\f),
  \end{equation}
for some $\theta_\mfp(\f) \in [0, \pi]$.
Now, we shall recall the Sato-Tate equi-distribution theorem of Barnet-Lamb, Gee, and Geraghty (\cite[Corollary 7.17]{BGG11}).
  \begin{thm}
 \label{5.5}
 Let $\f$ be a non-CM primitive form over $F$ of level $\mfc$, with trivial character and weight $2k$.
 Then $\{\theta_\mfp(\f)\}_{\mfp \in \mathbf{P}, \mfp \nmid \mfc\Dif_F}$ is equi-distributed in $[0,\pi]$ with respect 
 to $\mu_{\mathrm{ST}}=\frac{2}{\pi}{\sin}^2\theta d\theta$. 
  In other words, for any sub-interval $I \subseteq [0,\pi]$, we have
\begin{equation}  
   \lim_{x\to \infty} \frac{\# \{ \mfp\in \mathbf{P} \mid \mfp\nmid\mfc\Dif_F, \mathrm{N}(\mfp)\leq x, \theta_\mfp(\f)\in I \}}{\# \{ \mfp\in \mathbf{P} \mid \mathrm{N}(\mfp) \leq x \}} 
   = \mu_{\mathrm{ST}}(I) = \frac{2}{\pi}\int_I {\sin}^2 \theta d\theta 
   \end{equation} i.e., the natural density of $S=\{\mfp \in \mathbf{P} \mid \mfp\nmid\mfc\Dif_F, \theta_\mfp(\f)\in I \}$  is $\mu_{\mathrm{ST}}(I)$.
 \end{thm}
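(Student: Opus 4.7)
The plan is to derive this theorem as a direct reformulation of the Barnet-Lamb--Gee--Geraghty equi-distribution result \cite[Corollary 7.17]{BGG11}, in the spirit of \cite[Theorem 3.3]{KKT18}. The only substantive work is to check that the normalization used here matches theirs, and that the change of variable $x = 2\cos\theta$ transports the Sato-Tate semi-circle measure on $[-2,2]$ to $\mu_{\mathrm{ST}}$ on $[0,\pi]$.

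First I would verify that the angles $\theta_\mfp(\f)$ are well defined. For a primitive Hilbert form $\f$ of weight $2k$ with trivial character, the Ramanujan--Petersson bound (Deligne, together with Blasius in the Hilbert case) gives
\[ |C(\mfp,\f)| \le 2\, N(\mfp)^{(2k_0-1)/2} \quad \text{for every } \mfp \nmid \mfc\Dif_F, \]
so $\beta(\mfp,\f) \in [-2,2]$ and each value is uniquely written as $2\cos\theta_\mfp(\f)$ with $\theta_\mfp(\f) \in [0,\pi]$.

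Next, the BGG theorem asserts that, under the non-CM hypothesis, the numbers $\beta(\mfp,\f)$ are equi-distributed in $[-2,2]$ against the semi-circle measure $\tfrac{1}{2\pi}\sqrt{4-x^2}\,dx$. Substituting $x = 2\cos\theta$ with $\theta \in [0,\pi]$, we have $|dx| = 2\sin\theta\,d\theta$ and $\sqrt{4-x^2} = 2\sin\theta$, so this measure pulls back under $\theta \mapsto 2\cos\theta$ to
\[ \tfrac{1}{2\pi}\,(2\sin\theta)(2\sin\theta)\,d\theta = \tfrac{2}{\pi}\sin^2\theta\,d\theta = \mu_{\mathrm{ST}} \]
on $[0,\pi]$. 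Since $\theta \mapsto 2\cos\theta$ is a continuous bijection $[0,\pi]\to[-2,2]$, equi-distribution transfers: the $\theta_\mfp(\f)$ are equi-distributed on $[0,\pi]$ against $\mu_{\mathrm{ST}}$.

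Finally, for a sub-interval $I \subseteq [0,\pi]$ one approximates its characteristic function from above and below by continuous functions on $[0,\pi]$ and applies the equi-distribution just obtained; in the limit this yields the stated density. A minor bookkeeping point is that the denominator in the statement runs over all prime ideals with $N(\mfp)\le x$, while the numerator omits the finitely many primes dividing $\mfc\Dif_F$; these excluded primes contribute $O(1)$ and so vanish from the limit thanks to the prime ideal theorem. The only conceptual input is the BGG theorem itself, so the real ``work'' is just the routine change of variables; this is why the statement is labelled a reformulation.
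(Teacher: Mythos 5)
Your proposal is correct and matches the paper's intent: the paper gives no proof at all, simply citing \cite[Corollary 7.17]{BGG11} (via \cite[Theorem 3.3]{KKT18}) as a reformulation, and your change-of-variables computation transporting the semi-circle measure to $\frac{2}{\pi}\sin^2\theta\,d\theta$, together with the remarks on Deligne's bound and the finitely many excluded primes, is exactly the routine content implicit in that citation.
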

%
\section{Non-vanishing of Fourier coefficients at prime powers}
\label{Sec1}
In this section, we shall prove a result concerning the simultaneous non-vanishing of Fourier coefficients of primitive forms at prime powers.
Before proving this result, we prove an important proposition, which a generalization of~\cite[Lemma 2.2]{KRW07} or ~\cite[Lemma 2.5]{MM07} to $K$.
\begin{prop}\label{1:2}
Let $\f$ be a primitive form over $K$ of level $\mfc$, with character $\chi$ and
weight $2k$.
   Then there exists an integer $M_\f \geq 1$ with $N(\mfc)\mid M_\f$ such that for any prime $p \nmid M_\f$ and 
   for any prime ideal $\mfp \in \mathbb{P}$ over $p$,
   we have either $C(\mfp,\f) = 0$ or $C(\mfp^r,\f) \neq 0$ for all $r \geq 1$. 
\end{prop}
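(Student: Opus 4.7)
The plan is to rewrite the vanishing of $C(\mfp^r,\f)$ in terms of the ratio of Satake parameters at $\mfp$ being a root of unity of bounded order, and then rule out such primes $p$ by a $\mathfrak{P}$-adic parity argument that crucially uses the odd inertia degree hypothesis $\mfp\in\mathbb{P}$. The contradiction will come from the fact that the left side of a key identity has even $\mathfrak{P}$-adic valuation, while the right side, after clearing finitely many bad rational primes, has odd valuation arising from $(2k_0-1)$ (odd because the weight is $2k$) multiplied by the inertia degree $f(\mfp)$ (odd by hypothesis).

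First I would introduce the Satake parameters $\alpha_\mfp,\beta_\mfp$ at $\mfp$ as the roots of $X^{2}-C(\mfp,\f)X+\chi(\mfp)N(\mfp)^{2k_0-1}$, so that the Hecke recursion for primitive forms yields
\[
C(\mfp^r,\f)\;=\;\frac{\alpha_\mfp^{r+1}-\beta_\mfp^{r+1}}{\alpha_\mfp-\beta_\mfp}
\]
whenever $\alpha_\mfp\neq\beta_\mfp$; the equal case is incompatible with $C(\mfp,\f)\neq 0$ and $C(\mfp^r,\f)=0$, since then $C(\mfp^r,\f)=(r+1)\alpha_\mfp^{r}\neq 0$. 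Assuming $C(\mfp,\f)\neq 0$ and $C(\mfp^r,\f)=0$ for some $r\geq 1$, the ratio $\zeta:=\alpha_\mfp/\beta_\mfp$ is a primitive $d$-th root of unity with $d>1$, $d\mid r+1$, and a direct computation gives the key identity
\[
C(\mfp,\f)^{2}\;=\;\eta_\zeta\,\chi(\mfp)\,N(\mfp)^{2k_0-1},\qquad \eta_\zeta:=2+\zeta+\zeta^{-1}.
\]
Since the left side lies in the Hecke eigenvalue field $K_{\f}$ of $\f$, so does $\eta_\zeta$, forcing $\Q(\zeta+\zeta^{-1})\subseteq K_{\f}$ and hence $\phi(d)/2\leq[K_{\f}:\Q]$ (note that $\zeta=-1$ is ruled out, for then $\eta_\zeta=0$ would force $C(\mfp,\f)=0$, so $d\geq 3$). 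Thus $d$ is bounded, and $\eta_\zeta$ ranges over a finite set of nonzero algebraic integers.

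Finally I would pass to $\mathfrak{P}$-adic valuations for any prime $\mathfrak{P}$ of $K_{\f}$ above $p$. The left side of the key identity contributes $2\,v_\mathfrak{P}(C(\mfp,\f))\in 2\Z$, while the right side equals $v_\mathfrak{P}(\eta_\zeta)+(2k_0-1)\,f(\mfp)\,e(\mathfrak{P}/p)$, using that $\chi(\mfp)$ is a $\mathfrak{P}$-adic unit for $\mfp$ coprime to the conductor of $\chi$. I would then take $M_{\f}$ to be the product of $N(\mfc)$, the rational primes ramified in $K_{\f}/\Q$, and the rational primes dividing $N_{K_{\f}/\Q}(\eta_\zeta)$ as $\zeta$ runs over the finite set from the previous paragraph. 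For any $p\nmid M_{\f}$ and $\mfp\in\mathbb{P}$ above $p$, we then have $v_\mathfrak{P}(\eta_\zeta)=0$ and $e(\mathfrak{P}/p)=1$, so the right-hand valuation equals $(2k_0-1)\,f(\mfp)$, which is odd (odd times odd). This contradicts the even left side, proving that $C(\mfp,\f)\neq 0$ and $C(\mfp^r,\f)=0$ cannot hold simultaneously. The main obstacle is orchestrating the parity argument so that the odd inertia degree pairs with the odd weight factor $2k_0-1$ to beat the squared left side; once the character and ramification contributions are swept into $M_{\f}$, the odd-times-odd observation gives the finite-exception contradiction cleanly, upgrading the density-type statements of~\cite[Lemma 2.2]{KRW07} and~\cite[Lemma 2.5]{MM07} to our Hilbert setting.
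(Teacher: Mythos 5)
Your proposal is correct and follows essentially the same route as the paper: the Hecke recursion forces the ratio of Satake parameters to be a root of unity $\zeta\neq -1$, and the resulting identity $C(\mfp,\f)^2=(2+\zeta+\zeta^{-1})\chi(\mfp)N(\mfp)^{2k_0-1}$ combined with the oddness of $2k_0-1$ and of the inertia degree confines the exceptional primes $p$ to a finite set determined by the fixed number field $\Q(\f)$. Your valuation-parity argument (and the explicit bound on the order of $\zeta$ via $\Q(\zeta+\zeta^{-1})\subseteq\Q(\f)$) simply makes precise the finiteness step that the paper states more tersely as $\Q(\gamma(1+\zeta^{-1})\sqrt{p})\subseteq\Q(\f)$ holding for only finitely many $p$.
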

\begin{proof}
Let $p$ be a prime number such that $p \nmid N(\mfc)$. Let $\mfp \in \mathbb{P}$ be a prime ideal of $\mcO_K$ over $p$ and $\mfp \nmid \mfc$. 
If $C(\mfp,\f)=0$, then there is nothing prove. If $C(\mfp, \f) \neq 0$, then we need to show that $C(\mfp^r,\f) \neq 0$ for all $r\geq 2$,
except for finitely many prime ideals $\mfp \in \mathbb{P}$.  

Suppose that $C(\mfp, \f) \neq 0$ but $C(\mfp^r,\f) = 0$ for some $r\geq 2$.
Since $\f$ is a primitive form, then by Hecke relations, we have
\begin{equation*}
    C(\mfp^{m+1}, \f) = C(\mfp,\f)C(\mfp^m,\f) - \chi(\mfp)N(\mfp)^{2k_0-1} C(\mfp^{m-1},\f).
\end{equation*}
These relations can be re-interpreted as
\begin{equation}
\sum_{r=0}^\infty C(\mfp^r,\f)X^r = \frac{1}{1-C(\mfp,\f)X+\chi(\mfp)
N(\mfp)^{2k_0-1}X^2}.
\end{equation}
Suppose that
\begin{equation*}
    1-C(\mfp,\f)X+\chi(\mfp)
N(\mfp)^{2k_0-1}X^2 = (1- \alpha(\mfp)X)(1-\beta(\mfp)X).
\end{equation*}
By comparing the coefficients, we get that
\begin{equation*}
    \alpha(\mfp) + \beta(\mfp) = C(\mfp, \f) \ \ \ \ \ \ \mathrm{and} \ \ \ \ \ \ \alpha(\mfp)\beta(\mfp) = \chi(\mfp)N(\mfp)^{2k_0-1} \neq 0,
    \end{equation*}
since $\mfp \nmid \mfc$ and hence $\chi(\mfp) \neq 0$. If $\alpha(\mfp) = \beta(\mfp)$, then 
\begin{equation*}
    C(\mfp^r,\f) = (r+1)\alpha(\mfp)^r\neq 0,
\end{equation*}
which cannot happen for any $r \geq 2$. So, $\alpha(\mfp)$ cannot be equal to $\beta(\mfp)$.
Then by induction, for any $r \geq 2$, we have the following
\begin{equation*}
    C(\mfp^r,\f) = \frac{\alpha(\mfp)^{r+1} - \beta(\mfp)^{r+1}}{\alpha(\mfp)- \beta(\mfp)}.
\end{equation*}
In this case, we have
\begin{equation*}
    C(\mfp^r,\f) = 0 \ \ \mathrm{if\ and\ only\ if} \ \ \Bigg(\frac{\alpha(\mfp)}{\beta(\mfp)}\Bigg)^{r+1} = 1,
\end{equation*}
which implies that the ratio $\frac{\alpha(\mfp)}{\beta(\mfp)}$ is a root of unity. Since $C(\mfp,\f) \neq 0$, 
we get that $\alpha(\mfp) = \zeta \beta(\mfp)$ where $\zeta$ is a root of unity and $\zeta \ne -1$ . By the product relation, we get that
$\alpha(\mfp)^2= \zeta\chi(\mfp){{N(\mfp)}^{2k_0-1}}$, hence $\alpha(\mfp) = \pm \gamma {N(\mfp)}^{{(2k_0-1)}/2} $, 
where $\gamma^2 = \zeta \chi(\mfp)$. Therefore,
\begin{equation*}
    C(\mfp,\f) = (1+\zeta^{-1})\alpha(\mfp) = \pm \gamma (1+\zeta^{-1}){N(\mfp)^{(2k_0-1)/2}} \ne 0.
\end{equation*}
In particular, $\mathbb{Q}(\gamma (1+\zeta^{-1})N(\mfp)^\frac{2k_0-1}{2}) \subseteq \mathbb{Q}(\f)$,
where $\Q(\f)$ is the field generated by $\{C(\mfm,\f)\}_{\mfm \subseteq \mcO_K}$ and by the values of the character $\chi$.
Since $\mfp \in \mathbb{P}$, $N(\mfp)= p^{f}$, where $f \in \N$ odd.
Hence, we have
\begin{equation}\label{5.1}
    \mathbb{Q}(\gamma(1+\zeta^{-1})p^{\frac{f(2k_0-1)}{2}}) \subseteq \mathbb{Q}(\f).
\end{equation}
Since $2k_0-1$, $f$ are odd, we have that 
\begin{equation}\label{5.2}
    \mathbb{Q}(\gamma(1+\zeta^{-1})\sqrt{p}) \subseteq \mathbb{Q}(\f).
\end{equation}
By~\cite[Proposition 2.8]{Shi78}, the field $\mathbb{Q}(\f)$ is a number field. Hence, the number of such primes $p$ are finite. Take $M_\f$ to be  the product of all such primes $p$ and $N(\mfc)$.
Thus, for any prime $p\nmid M_\f$ and for any prime ideal $\mfp \in \mathbb{P}$ over $p$, we have
either $C(\mfp,\f) = 0$ or $C(\mfp^r,\f)\neq 0$ for all $r\geq 1$.
\end{proof}
Observe that, the above proposition holds only for primes of $\mathbb{P}$.
This is because for primes of $\mathbf{P} \setminus \mathbb{P}$, in the above proof, ~\eqref{5.1} does not imply~\eqref{5.2}.
In this case, we may not be able to say that the number of such primes are finite. 
However, if $K$ is Galois over $\mathbb{Q}$, then the above proposition can be re-stated as:
\begin{lem}
Let $\f$ be as in Proposition~\ref{1:2}.
If $K$ is Galois over $\Q$, then
there exists an integer $M_\f \geq 1$ with $N(\mfc)\mid M_\f$ such that for any prime $p \nmid M_\f$ and 
   for any prime ideal $\mfp \in \mathbf{P}$ over $p$, 
   we have either $C(\mfp,\f) = 0$ or $C(\mfp^r,\f) \neq 0$ for all $r \geq 1$. 
\end{lem}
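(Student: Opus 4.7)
The plan is to reduce the lemma to a direct application of Proposition~\ref{1:2}, exploiting the standing hypothesis that $[K:\Q]$ is odd together with the Galois hypothesis. The remark immediately preceding the lemma already pinpoints why Proposition~\ref{1:2} only covers primes in $\mathbb{P}$: in the computation leading from~\eqref{5.1} to~\eqref{5.2}, one needs the inertia degree $f$ to be odd so that the exponent $\tfrac{f(2k_0-1)}{2}$ reduces to $\tfrac{1}{2}$ modulo integers, forcing $\sqrt{p} \in \Q(\f)$ and thereby bounding the bad primes. The whole point is that in the Galois-over-$\Q$ setting this distinction collapses, because $\mathbf{P} = \mathbb{P}$.

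First, I would verify this identification. For any rational prime $p$, the Galois hypothesis implies that the decomposition $p\,\mcO_K = (\mfp_1\cdots \mfp_g)^e$ has all primes $\mfp_i$ lying over $p$ with common ramification index $e$ and common inertia degree $f$, subject to $efg = [K:\Q]$. Since $[K:\Q]$ is assumed odd, the divisor $f$ of an odd number is itself odd, so every prime $\mfp_i$ belongs to $\mathbb{P}$. This is precisely the observation already recorded in the paragraph ``Furthermore, if $K$ is Galois, then every prime ideal of $\mcO_K$ has odd inertia degree.''

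Second, I would apply Proposition~\ref{1:2} verbatim: take $M_\f$ to be the integer it produces. For any prime $p \nmid M_\f$ and any prime ideal $\mfp \in \mathbf{P}$ lying over $p$, the first step guarantees $\mfp \in \mathbb{P}$, and then Proposition~\ref{1:2} gives the dichotomy $C(\mfp,\f)=0$ or $C(\mfp^r,\f)\neq 0$ for all $r\geq 1$. The resulting $M_\f$ continues to be divisible by $N(\mfc)$, as required.

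There is essentially no substantive obstacle here; the proof is almost a bookkeeping corollary. The only point worth flagging is that one must invoke the odd-degree hypothesis on $K$ that is in force throughout~\S\ref{Sec1}, since without it a Galois extension could contain primes of even inertia degree and the argument of Proposition~\ref{1:2} would genuinely fail.
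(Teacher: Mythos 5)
Your proof is correct and is exactly the argument the paper intends (the paper leaves it implicit, having already recorded in the preliminaries that a Galois extension of odd degree has all inertia degrees odd, so that $\mathbf{P}=\mathbb{P}$ and Proposition~\ref{1:2} applies verbatim). No issues.
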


Now, we are in a position to state our main result of this section, which improves the result~\cite[Theorem 3.2]{KK}.
\begin{thm}
\label{maintheorem3}
Let $\f$ and $\g$ be two primitive forms over $K$ and 
of levels $\mfc_1,\mfc_2$, with characters $\chi_1 \ \mathrm{and} \ \chi_2$ and weights $2k,2l$,
 respectively.
 For any prime $p \nmid M_\f M_\g$, for any prime ideal $\mfp \in \mathbb{P}$ over $p$, the set
	\begin{equation}
	\label{non-vanishingset}
	A_{\mfp} := \{m\in \N| C(\mfp^m,\f)C(\mfp^m,\g) \neq 0\} 
	\end{equation}
        contains $2\N$, where  $M_\f$ and $M_\g$ are as in Lemma~\ref{1:2} for $\f,\g$, respectively. Moreover, the natural density of the set $A_{\mfp}$ 
        is either $\frac{1}{2}$ or $1$.
\end{thm}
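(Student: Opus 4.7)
The plan is to combine Proposition~\ref{1:2} with a direct parity analysis of the Hecke recurrence
\[ C(\mfp^{m+1},\f) = C(\mfp,\f)\,C(\mfp^m,\f) - \chi_1(\mfp)N(\mfp)^{2k_0-1}\,C(\mfp^{m-1},\f), \]
and its analogue for $\g$. Fix $p \nmid M_\f M_\g$ and a prime $\mfp \in \mathbb{P}$ above $p$. Since $p \nmid M_\f$ and $\mfp \in \mathbb{P}$, Proposition~\ref{1:2} gives the dichotomy: either $C(\mfp,\f) = 0$, or $C(\mfp^r,\f) \neq 0$ for all $r \geq 1$. The same dichotomy holds for $\g$.

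To prove $2\N \subseteq A_\mfp$, I would handle the two branches of the dichotomy separately for each form. If $C(\mfp,\f) \neq 0$, Proposition~\ref{1:2} directly gives $C(\mfp^r,\f) \neq 0$ for all $r$. If instead $C(\mfp,\f) = 0$, then using the normalization $C(\mcO_K,\f) = 1$ and the Hecke recurrence, a straightforward induction on $m$ yields
\[ C(\mfp^{2m},\f) = \bigl(-\chi_1(\mfp)N(\mfp)^{2k_0-1}\bigr)^{m}, \qquad C(\mfp^{2m+1},\f) = 0, \]
and since $\chi_1(\mfp) \neq 0$ (as $\mfp \nmid \mfc_1$), every even-indexed coefficient is nonzero. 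The identical argument applies to $\g$. Multiplying these conclusions shows $C(\mfp^{2m},\f)\,C(\mfp^{2m},\g) \neq 0$ for all $m \geq 1$, i.e.\ $2\N \subseteq A_\mfp$.

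For the density statement I would split into cases according to the dichotomy. If $C(\mfp,\f) \neq 0$ and $C(\mfp,\g) \neq 0$, then Proposition~\ref{1:2} gives $A_\mfp = \N$, so the density is $1$. If at least one of $C(\mfp,\f), C(\mfp,\g)$ vanishes, the parity formula above forces the corresponding sequence (and hence the product) to vanish on all odd exponents, while the inclusion $2\N \subseteq A_\mfp$ shows the product is nonzero on all even exponents; thus $A_\mfp = 2\N$, with natural density exactly $\tfrac12$. This covers every case and gives the dichotomy $\{1/2, 1\}$.

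I do not anticipate a serious obstacle here: the only substantive input is Proposition~\ref{1:2}, which has already been established, and the rest is a bookkeeping exercise in the Hecke recurrence. The one subtlety worth flagging is ensuring that $\chi_i(\mfp) \neq 0$ in the $C(\mfp,\f) = 0$ branch, which is why the hypothesis $N(\mfc_1)N(\mfc_2) \mid M_\f M_\g$ built into Proposition~\ref{1:2} is essential; this guarantees that the recurrence truly propagates nonvanishing from $C(\mcO_K,\f) = 1$ to every even power of $\mfp$.
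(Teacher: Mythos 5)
Your proposal is correct and follows essentially the same route as the paper's own proof: Proposition~\ref{1:2} handles the case $C(\mfp,\f)C(\mfp,\g)\neq 0$ (giving $A_\mfp=\N$), and when one of the first coefficients vanishes, the Hecke recurrence with $\chi_i(\mfp)\neq 0$ forces vanishing exactly on odd exponents and non-vanishing on even ones, so $A_\mfp=2\N$. Your explicit closed form $C(\mfp^{2m},\f)=(-\chi_1(\mfp)N(\mfp)^{2k_0-1})^m$ is just a slightly more concrete version of the paper's parity observation.
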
  
\begin{proof}
For any prime $p\nmid M_\f M_\g$, let $\mfp \in \mathbb{P}$ be a prime ideal over $p$.
If $C(\mfp, \f)C(\mfp, \g) \ne 0$, then by Lemma~\ref{1:2}, we have that 
$$\{m\in \N| C(\mfp^m,\f)C(\mfp^m,\g) \neq 0\} = \N.$$
In this case, the natural density of $A_{\mfp}$ is $1$.

Suppose at least one of $C(\mfp, \f)$ or $C(\mfp, \g)$ is zero, say $C(\mfp,\f)=0$.
By the Hecke relations for the primitive form $\f$
\begin{equation}\label{Hecke}
C(\mfp^m,\f) = -\chi_1(\mfp)N(\mfp)^{2k_0-1}C(\mfp^{m-2},\f),
\end{equation}
where $\chi_1(\mfp) \neq 0$, since $\mfp\nmid \mfc_1$. Hence, we see that the vanishing or non-vanishing of $C(\mfp^m, \f)$ depends only on $m \pmod 2$.
Therefore, $C(\mfp^{2m+1},\f) = 0$ (resp., $C(\mfp^{2m},\f) \ne 0$) as $C(\mfp,\f)=0$ (resp., $C(\mfp^2, \f) \neq 0$) for all $m \geq 1$.
Hence, we have that $$\{m\in \N \mid C(\mfp^m,\f) \neq 0\} = 2\N.$$ 
Arguing similarly for the primitive form $\g$,
we see that the set $\{m\in \N| C(\mfp^m,\g) \neq 0\}$ is either $\N$ or $2\N$ depends on whether $C(\mfp,\g) \ne 0$ or $C(\mfp,\g)= 0$, respectively.
So any of these cases, we get that $$\{m\in \N| C(\mfp^m,\f)C(\mfp^m,\g) \neq 0\} = 2\N.$$ 
In this case, the natural density of $A_{\mfp}$ is $\frac{1}{2}$. This proves the Theorem.
\end{proof}
In the view of above theorem, it is a natural question to ask is given a $\mfp \in \mathbb{P}$, how often the density of $A_{\mfp}$ is $1$?
The following proposition settles this question.

\begin{prop} 
\label{2.3.4}
Let $\f$ and $\g$ be same as in Theorem~\ref{maintheorem3}. If $\f,\g$ are non-CM eigenforms, 
then there exists a set $S \subseteq \mathbb{P}$  with natural density is $0$
such that 
\begin{equation}
\label{density}
A_{\mfp}=\{ m\in \mathbb{N} \mid C(\mfp^m, \f)C(\mfp^m,\g) \neq 0 \} = \mathbb{N} 
\end{equation}
for all prime ideals $\mfp \in \mathbb{P}$ outside of $S$.
\end{prop}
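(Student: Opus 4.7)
The plan is to reduce the statement to a direct application of the Sato-Tate equi-distribution theorem (Theorem~\ref{5.5}) via the dichotomy already established in Theorem~\ref{maintheorem3}. The proof of that theorem shows: for $\mfp \in \mathbb{P}$ whose rational prime $p$ does not divide $M_\f M_\g$, $A_\mfp$ equals $\mathbb{N}$ when both $C(\mfp,\f)$ and $C(\mfp,\g)$ are nonzero, and equals $2\mathbb{N}$ otherwise. So my job is to show that the set of $\mfp \in \mathbb{P}$ at which $C(\mfp,\f)$ or $C(\mfp,\g)$ vanishes (together with the finite set of primes above $M_\f M_\g$) has natural density zero.

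I would define
\begin{equation*}
S := \{\mfp \in \mathbb{P} : \mfp\cap\Z \text{ divides } M_\f M_\g\} \,\cup\, \{\mfp\in\mathbb{P} : C(\mfp,\f)=0\} \,\cup\, \{\mfp\in\mathbb{P} : C(\mfp,\g)=0\}.
\end{equation*}
By the discussion above, $A_\mfp = \mathbb{N}$ for all $\mfp\in\mathbb{P}\setminus S$, so only the density computation remains. The first piece is finite and contributes nothing. For the second piece, I use the parametrization in~(\ref{theta}): $C(\mfp,\f)=0$ is equivalent to $\cos\theta_\mfp(\f) = 0$, i.e.\ $\theta_\mfp(\f) = \pi/2$. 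Since $\f$ is non-CM, Theorem~\ref{5.5} applies, and for the shrinking intervals $I_\epsilon = [\pi/2-\epsilon,\pi/2+\epsilon]$ one obtains
\begin{equation*}
\limsup_{x\to\infty} \frac{\#\{\mfp\in\mathbf{P} : N(\mfp)\le x,\ \theta_\mfp(\f)=\pi/2\}}{\#\{\mfp\in\mathbf{P} : N(\mfp)\le x\}} \,\le\, \mu_{\mathrm{ST}}(I_\epsilon) = \frac{2}{\pi}\int_{I_\epsilon}\sin^2\theta\,d\theta \xrightarrow{\epsilon\to 0} 0.
\end{equation*}
Hence $\{\mfp\in\mathbf{P} : C(\mfp,\f)=0\}$, and therefore its subset $\{\mfp\in\mathbb{P} : C(\mfp,\f)=0\}$, has natural density zero; the same reasoning applied to $\g$ handles the third piece. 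Combining, $S$ has natural density zero.

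The main obstacle (minor but worth noting) is that Theorem~\ref{5.5} is stated here for primitive forms of trivial character, whereas the forms in Theorem~\ref{maintheorem3}, and hence in the present proposition, are allowed to carry nontrivial characters $\chi_1,\chi_2$. This is not a genuine difficulty: the Barnet-Lamb--Gee--Geraghty equi-distribution theorem is available in the necessary generality, so I would simply invoke that more general version (or, alternatively, restrict the proposition's hypothesis to trivial character, matching the setting of Theorem~\ref{5.5} verbatim). Apart from this bookkeeping point, the argument is essentially a ``single-point-has-measure-zero'' consequence of Sato-Tate combined with the clean case analysis already performed in Theorem~\ref{maintheorem3}.
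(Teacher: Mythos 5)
Your proof is correct and follows essentially the same route as the paper: the paper likewise takes $S$ to be the union of the finitely many primes above $M_\f M_\g$ with the zero sets $\{\mfp \in \mathbb{P} : C(\mfp,\f)=0\}$ and $\{\mfp \in \mathbb{P} : C(\mfp,\g)=0\}$, invokes Theorem~\ref{5.5} to see these have density zero, and then applies Proposition~\ref{1:2} off $S$. If anything you are more careful than the paper, both in spelling out why the single angle $\theta_\mfp(\f)=\pi/2$ has Sato--Tate measure zero and in flagging that Theorem~\ref{5.5} as stated assumes trivial character while the proposition inherits possibly nontrivial $\chi_1,\chi_2$ from Theorem~\ref{maintheorem3} --- a gap the paper silently passes over.
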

\begin{proof}
Define the set $S^{\prime} = \{ \mfp \in \mathbb{P} \mid C(\mfp, \f)C(\mfp, \g) = 0\}$. Clearly, we have 
$$\{ \mfp \in \mathbb{P} \mid C(\mfp, \f) = 0\} \subseteq S^{\prime} \subseteq \{ \mfp \in \mathbb{P} \mid C(\mfp, \f) =0 \} \cup \{ \mfp
 \in \mathbb{P} \mid C(\mfp, \g) =0 \}.$$ 
 By Theorem~\ref{5.5}, the natural density of $\{ \mfp \in \mathbf{P} \mid C(\mfp, \f) = 0\}$ is $0$ and hence 
 the natural density of $\{ \mfp \in \mathbb{P} \mid C(\mfp,\f) =0 \}$ is $0$. Similarly, for the eigenform $\g$ as well.
  Hence, the natural density of $S^{\prime}$ is $0$. 
 Therefore, the natural density of the set $S = S^{\prime} \cup \{ \mfp \in \mathbb{P} \mid \mfp \mid p \ \mathrm{and} \ p\mid M_\f M_\g \}$ is $0$. 
 For any $\mfp \not \in S$, by Lemma~\ref{1:2},  we have $C(\mfp^m, \f) C(\mfp^m, \g) \neq 0$ for all $m \geq 1$.
\end{proof}
We remark that, in the above result, if we assume $K$ is Galois, then~\eqref{density} holds for density $1$ set of primes in $\mathbf{P}$ (because, in this case $\mathbb{P}=\mathbf{P}$).


\section{Sign changes of Hilbert modular forms}
\label{Sec2}
In this section, we shall study the sign change results for the Fourier coefficients of primitive forms, and later we study the simultaneous sign changes 
for the Fourier coefficients of two non-zero Hilbert modular forms of different integral weights. 

\subsection{Sign changes}
In ~\cite[Theorem 1.1]{MT14}, the authors show that a non-zero Hilbert cusp form with real Fourier coefficients change signs infinitely often. 
In the next proposition, for primitive forms, we show that for almost all the primes $\mfp \in \mathbb{P}$,
the Fourier coefficients $\{C(\mfp^r,\f)\}_{r\in \mathbb{N}}$ change signs infinitely often.
\begin{prop}
\label{Sign_Changes_Prime_Fixed}
Let $\f$ be a primitive form over $K$ of level $\mfc$, trivial character and
weight $2k$.
Then, for all but finitely many $\mfp \in \mathbb{P}$, the Fourier coefficients $\{C(\mfp^r,\f)\}_{r \in \N}$ 
change signs infinitely often.
\end{prop}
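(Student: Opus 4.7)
The plan is to split on whether $C(\mfp, \f)$ vanishes, and in the nonvanishing case to use the Chebyshev-type closed form for $C(\mfp^r, \f)$ coming from the Satake parameters. By Proposition~\ref{1:2}, for every $\mfp \in \mathbb{P}$ lying above a rational prime $p \nmid M_\f$, either $C(\mfp, \f) = 0$ or $C(\mfp^r, \f) \neq 0$ for all $r \geq 1$; so I may discard the finitely many $\mfp$ above primes dividing $M_\f$ at the outset.

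If $C(\mfp, \f) = 0$, the Hecke recurrence (with trivial character) collapses to $C(\mfp^{m+1}, \f) = -N(\mfp)^{2k_0-1}\, C(\mfp^{m-1}, \f)$, so $C(\mfp^{2j+1}, \f) = 0$ while $C(\mfp^{2j}, \f) = (-1)^j N(\mfp)^{j(2k_0-1)}$. The even-indexed subsequence alternates in sign, giving infinitely many sign changes.

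If $C(\mfp, \f) \neq 0$, introduce Satake parameters $\alpha(\mfp), \beta(\mfp)$ with $\alpha(\mfp) + \beta(\mfp) = C(\mfp, \f)$ and $\alpha(\mfp)\beta(\mfp) = N(\mfp)^{2k_0-1}$. When $\alpha(\mfp) \neq \beta(\mfp)$, Deligne's bound together with positivity of the product forces them to be non-real complex conjugates $N(\mfp)^{(2k_0-1)/2} e^{\pm i\theta_\mfp}$ with $\theta_\mfp \in (0, \pi)$, and a direct computation gives
\[ C(\mfp^r, \f) = N(\mfp)^{r(2k_0-1)/2}\cdot \frac{\sin((r+1)\theta_\mfp)}{\sin \theta_\mfp}. \]
Since $\theta_\mfp \in (0, \pi)$, a short case analysis on whether $\theta_\mfp/\pi$ is rational or irrational shows that $\sin((r+1)\theta_\mfp)$ takes both positive and negative values for infinitely many $r$, hence so does $C(\mfp^r, \f)$.

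The remaining case is $\alpha(\mfp) = \beta(\mfp)$, where $C(\mfp^r, \f) = (r+1)\alpha(\mfp)^r$ never changes sign; this is the main obstacle. I rule it out on all but finitely many $\mfp \in \mathbb{P}$ by mimicking the number-field argument from Proposition~\ref{1:2}: equality forces $C(\mfp, \f)^2 = 4 N(\mfp)^{2k_0-1} = 4 p^{f(2k_0-1)}$, where $f$ is the inertia degree of $\mfp$, which is odd since $\mfp \in \mathbb{P}$. As $f(2k_0-1)$ is then odd, we obtain $\sqrt{p} \in \Q(\f)$; since $\Q(\f)$ is a fixed number field, this can hold for only finitely many rational primes $p$. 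Absorbing these finitely many primes into the exceptional set completes the proof.
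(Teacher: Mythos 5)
Your proof is correct, and it coincides in essence with the second proof the paper gives for this proposition (the one elaborating the Kohnen--Martin remark): there too the authors reduce to the Chebyshev-type formula $\beta(\mfp^m,\f)=\sin((m+1)\theta_\mfp(\f))/\sin\theta_\mfp(\f)$, exclude the degenerate case $\theta_\mfp(\f)\in\{0,\pi\}$ (your $\alpha(\mfp)=\beta(\mfp)$ case) for all but finitely many $\mfp\in\mathbb{P}$ by exactly your $\sqrt{p}\in\Q(\f)$ argument, and then extract sign changes of $\sin((r+1)\theta_\mfp(\f))$ --- the paper does this last step by noting each interval of the form $\bigl(\tfrac{2j\pi}{\theta_\mfp(\f)},\tfrac{(2j+1)\pi}{\theta_\mfp(\f)}\bigr)$ and its companion has length greater than $1$ and hence contains an integer, whereas you argue via the rational/irrational dichotomy for $\theta_\mfp(\f)/\pi$; both work. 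The paper's \emph{first} proof is genuinely different: it assumes the signs are eventually constant, passes to the Dirichlet series $\sum_r C(\mfp^r,\f)N(\mfp)^{-rs}$, and uses Landau's theorem to force $\alpha(\mfp),\beta(\mfp)$ to be real, reaching the same contradiction $C(\mfp,\f)=\pm 2N(\mfp)^{(2k_0-1)/2}$. Your invocation of Proposition~\ref{1:2} at the start and the separate treatment of $C(\mfp,\f)=0$ are harmless but not actually needed, since that case is just $\theta_\mfp(\f)=\pi/2$ and is covered by the sine formula.
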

\begin{proof}
Let $\mfp \in \mathbb{P}$ be a prime ideal  such that $C(\mfp^r,\f) \geq 0$ for all $r \gg 0$ (a similar argument holds in the other case as well).
Since $\f$ is primitive, by Hecke relations, we have
\begin{equation*}
    C(\mfp^{m+1}, \f) = C(\mfp,\f)C(\mfp^m,\f) - N(\mfp)^{2k_0-1} C(\mfp^{m-1},\f).
\end{equation*}
These Hecke relations can be re-interpreted as
\begin{equation}
\sum_{r=0}^\infty C(\mfp^r,\f)X^r = \frac{1}{1-C(\mfp,\f)X + N(\mfp)^{2k_0-1}X^2}.
\end{equation}
Suppose that
\begin{equation*}
    1-C(\mfp,\f)X+
N(\mfp)^{2k_0-1}X^2 = (1- \alpha(\mfp)X)(1-\beta(\mfp)X).
\end{equation*}
Then
\begin{equation}\label{222}
\sum_{r=0}^{\infty} C(\mfp^r,\f)X^r = (1- \alpha(\mfp)X)^{-1}(1-\beta(\mfp)X)^{-1},
\end{equation}
Comparing the coefficients we have $$\alpha(\mfp) + \beta(\mfp) = C(\mfp, \f) \ \  \mathrm{and} \ \  \alpha(\mfp)\beta(\mfp) = N(\mfp)^{2k_0-1},$$
where
\begin{equation}\label{224}
\alpha(\mfp), \beta(\mfp) = \frac{C(\mfp,\f) \pm \sqrt{C(\mfp,\f)^2 - 4N(\mfp)^{2k_0-1}}}{2}.
\end{equation}
For $s\in \mathbb{C}$, replacing $X$ by $N(\mfp)^{-s}$ in ~\eqref{222}, we get that
\begin{equation}\label{223}
\sum_{r=0}^{\infty}C(\mfp^r,\f)N(\mfp)^{-sr} = (1- \alpha(\mfp)N(\mfp)^{-s})^{-1}(1-\beta(\mfp)N(\mfp)^{-s})^{-1}.
\end{equation}
The above Dirichlet series converges for $\mrm{Re}(s) \gg 0$ and the coefficients are non-negative except for finitely many terms.
By Landau's theorem for Dirichlet series with non-negative terms, we get the series~\eqref{223} is either converges everywhere 
or it has a singularity at the real point of its abscissa of convergence.
The series has a pole at $s\in \mathbb{C}$ for which $N(\mfp)^s = \alpha(\mfp)$ or $N(\mfp)^s = \beta(\mfp)$ holds, hence the first case is not possible. 
Then the only possibility is that the series  has a singularity at the real point of its abscissa of convergence. In particular,
one of (and hence both of) $\alpha(\mfp)$ or $\beta (\mfp)$ must be real. Hence, we get that ${C(\mfp,\f)}^2 \geq 4N(\mfp)^{2k_0-1}$. However, by Deligne's bound for $\f$, we have 
\begin{equation}\label{Dbound}
 {C(\mfp,\f)}^2 \leq 4N(\mfp)^{2k_0-1}.
 \end{equation}
Therefore, 
  \begin{equation}
  \label{225} 
  C(\mfp,\f) = \pm 2N(\mfp)^{\frac{2k_0-1}{2}} \in \mathbb{Q}(\f).
\end{equation}  
Since $\mfp \in \mathbb{P}$, by~\eqref{225}, we get $\sqrt{p} \in \mathbb{Q}(\f)$, which can only happen for finitely many primes $p$.
This proves the lemma.
\end{proof}

In~\cite{KM14}, Kohnen and Martin remarked that the sign change results for the Fourier coefficients can also be proved by using sign changes
of $\sin(\theta)$. We elaborate this remark and reprove the above result. For this, we need to recall the following lemma (cf.  by~\cite[Proposition 5.1]{KK}
for a proof).
\begin{lem}
Let $\f$ be a primitive form over $F$ of level $\mfc$, with trivial character and weight $2k$.
  For any prime ideal $\mfp \nmid \mfc \Dif_F$, let $\theta_\mfp(\f) \in [0,\pi]$ be defined as in~\eqref{theta}.
 Then, for any $m \geq 1$, we have 
 \begin{equation}\label{normalized}
 \beta(\mfp^m,\f) = 
 \begin{cases}
 (-1)^m(m+1) & \mathrm{if} \ \theta_\mfp(\f) = \pi,\\
 m+1 & \mathrm{if} \ \theta_\mfp(\f) = 0,\\
 \frac{\sin((m+1)\theta_\mfp(\f))}{\sin\theta_\mfp(\f)} & \mathrm{if} \ 0<\theta_\mfp(\f) < \pi.
 \end{cases}
  \end{equation}
 \end{lem}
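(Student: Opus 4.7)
The proof will proceed by reducing the claim to a standard recursion of Chebyshev-type. The starting point is the Hecke relation at a prime ideal $\mfp \nmid \mfc\Dif_F$ (with trivial character), already displayed earlier in the excerpt:
\[ C(\mfp^{m+1},\f) = C(\mfp,\f)\,C(\mfp^m,\f) - N(\mfp)^{2k_0-1}\,C(\mfp^{m-1},\f). \]
The plan is to divide through by $N(\mfp)^{(m+1)(2k_0-1)/2}$ to pass from $C(\mfp^m,\f)$ to the normalized quantity $\beta(\mfp^m,\f) = C(\mfp^m,\f)/N(\mfp)^{m(2k_0-1)/2}$. This yields the clean three-term recursion
\[ \beta(\mfp^{m+1},\f) = \beta(\mfp,\f)\,\beta(\mfp^m,\f) - \beta(\mfp^{m-1},\f), \qquad m \geq 1, \]
with base cases $\beta(\mcO_K,\f)=1$ and $\beta(\mfp,\f)=2\cos\theta_\mfp(\f)$ (by the definition~\eqref{theta}).

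Once the recursion is in hand, the three cases are verified separately by induction on $m$. For $0 < \theta_\mfp(\f) < \pi$, the candidate formula is $\sin((m+1)\theta_\mfp(\f))/\sin\theta_\mfp(\f)$, and the induction step reduces to the standard trigonometric identity
\[ \sin((m+2)\theta) = 2\cos\theta \, \sin((m+1)\theta) - \sin(m\theta), \]
which is essentially the defining recursion of the Chebyshev polynomials of the second kind. For $\theta_\mfp(\f)=0$ one has $\beta(\mfp,\f)=2$ and must check that $m+1$ satisfies $(m+2) = 2(m+1) - m$, which is immediate. For $\theta_\mfp(\f)=\pi$ one has $\beta(\mfp,\f) = -2$, and a short sign-tracking computation shows that $(-1)^m(m+1)$ satisfies the recursion with the correct initial data. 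In each case the two base values $m=0,1$ match, so the induction closes.

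There is no genuine obstacle here; the only point requiring care is the book-keeping of the normalization exponent $(2k_0-1)/2$, which must be used consistently so that the recursion for $\beta$ has coefficient $1$ in front of $\beta(\mfp^{m-1},\f)$ rather than a power of $N(\mfp)$. Once the recursion is written in its normalized form, the three formulas in~\eqref{normalized} are recognized as the three specializations of $U_m(\cos\theta_\mfp(\f))$, where $U_m$ is the $m$-th Chebyshev polynomial of the second kind, and the proof is essentially a one-line induction in each case.
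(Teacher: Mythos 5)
Your argument is correct and is the standard one: the paper itself gives no proof of this lemma, deferring to~\cite[Proposition 5.1]{KK}, and that reference proceeds exactly as you do, normalizing the Hecke recursion by $N(\mfp)^{(2k_0-1)/2}$ per power of $\mfp$ to obtain $\beta(\mfp^{m+1},\f)=2\cos\theta_\mfp(\f)\,\beta(\mfp^m,\f)-\beta(\mfp^{m-1},\f)$ and recognizing the three cases as the values of the Chebyshev polynomials of the second kind. No gaps; your bookkeeping of the exponent is the only delicate point and you handle it correctly.
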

Now, we shall give another proof of Proposition~\ref{Sign_Changes_Prime_Fixed}.
\begin{proof}
For any $\mfp \in \mathbb{P}$, if $\theta_\mfp(\f) = 0$ or $\pi$, then 
$C(\mfp,\f) = \pm 2N(\mfp)^{\frac{2k_0-1}{2}} \in \mathbb{Q}(\f)$,
which can happen only for finitely many $\mfp  \in \mathbb{P}$. 
So, without loss of generality, we can assume that $0<\theta_\mfp(\f) < \pi$, hence $\sin(\theta_\mfp(\f))>0$.
By~\eqref{normalized}, we have
 $$C(\mfp^m,\f)\gtrless0 \iff \sin2\pi (m+1)\frac{\theta_\mfp(\f)}{2\pi}\gtrless 0.$$
Let $x=\frac{\theta_\mfp(\f)}{2\pi}$.
For any $j \in \N$, the lengths of the intervals $(\frac{2j}{2x}, \frac{(2j +1)}{2x})$ and $(\frac{(2j-1)}{2x}, \frac{2j}{2x} )$ are bigger than $1$, 
as $\frac{1}{2x} > 1$. Hence, there exists $n_j, m_j \in \Z$ such that $n_j+1 \in (\frac{2j}{2x}, \frac{2j +1}{2x})$ and $m_j +1 \in ( \frac{2j-1}{2x}, \frac{2j}{2x})$.
Therefore, we have $\sin ((n_j+1)\theta_\mfp(\f)) >0$ and $\sin((m_j+1)\theta_\mfp(\f)) <0$. This completes the proof.
 \end{proof}
In the above proposition, for a prime $\mfp \in \mathbb{P}$, we have studied the sign changes for $\{C(\mfp^r,\f)\}_{r \in \N}$.
Now, for a fixed $r \in \N$, we are interested in studying the sign changes for $\{C(\mfp^r,\f)\}_{\mfp \in \mathbf{P}}$. 

For primitive forms over $\mathbb{Q}$, this question has been studied in~\cite[Theorem 1.1]{MKV18}. 
In fact, they have computed the natural densities of these sets depending on $r$ is even or odd.
In this next theorem, 
we shall show that a similar result  holds for primitive forms over $F$, essentially by following the same approach. So, we shall
state the theorem and sketch a proof of it. To state it, we shall need the notion of natural density for a subset of prime ideals.
\begin{dfn}
 Let $F$ be a number field and $S \subseteq \mathbf{P}$ be a subset of prime ideals of $\mcO_F$. The natural density of $S$ defined as
 $$d(S)= \lim_{x\to \infty} \frac{\# \{\mfp \in S \mid \mathrm{N}(\mfp) \leq x\}}{\# \{\mfp \in \mathbf{P} \mid \mathrm{N}(\mfp)\leq x\}},$$ 
 if the limit exists.
 \end{dfn}

\begin{thm}
\label{Meher}
\label{Sign_Changes_exponent_Fixed}
Let $\f$ be a non-CM primitive form over $F$ of level $\mfc$, with trivial character and weight $2k$.
For any $m \geq 1$, we define  
$$\mathbf{P}(m)_{\gtrless 0} = \{\mfp \in \mathbf{P} \mid \mfp \nmid \mfc \Dif_F, C(\mfp^m,\f) \gtrless 0\}.$$
\begin{enumerate}
 \item If $m \equiv 1 \pmod 2$, then $$d(\mathbf{P}(m)_{>0})= d(\mathbf{P}(m)_{<0}) = \frac{1}{2}.$$
 \item If $m \equiv 0 \pmod 2$, then 
          $$d(\mathbf{P}(m)_{> 0}) = \frac{m+2}{2(m+1)} -\frac{1}{2\pi}\tan \Bigg(\frac{\pi}{m+1}\Bigg), \ \mrm{and}$$ 
           $$d(\mathbf{P}(m)_{<0}) = \frac{m}{2(m+1)} + \frac{1}{2\pi}\tan \Bigg(\frac{\pi}{m+1}\Bigg).$$
\end{enumerate}
In particular, then for any $m\in \mathbb{N}$, the sequence $\{C(\mfp^m,\f)\}_{\mfp\in \mathbf{P}}$ changes sign infinitely often. 
\end{thm}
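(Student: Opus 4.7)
The plan is to reduce the theorem to a $\mu_{\mathrm{ST}}$-measure computation on $[0,\pi]$ via the preceding lemma. For $\mfp\nmid\mfc\Dif_F$ with $\theta_\mfp(\f)\in(0,\pi)$, the formula $\beta(\mfp^m,\f) = \sin((m+1)\theta_\mfp(\f))/\sin\theta_\mfp(\f)$ shows that the sign of $C(\mfp^m,\f)$ coincides with the sign of $\sin((m+1)\theta_\mfp(\f))$, since $C(\mfp^m,\f)$ differs from $\beta(\mfp^m,\f)$ by the positive factor $N(\mfp)^{m(2k_0-1)/2}$ and $\sin\theta_\mfp(\f)>0$ on $(0,\pi)$. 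The excluded points $\theta_\mfp(\f)\in\{k\pi/(m+1): 0\le k\le m+1\}$ form a finite subset of $[0,\pi]$, so (by Theorem~\ref{5.5}) contribute density zero. Consequently
\begin{equation*}
d(\mathbf{P}(m)_{>0}) = \mu_{\mathrm{ST}}\bigl(\{\theta\in[0,\pi] : \sin((m+1)\theta) > 0\}\bigr),
\end{equation*}
and analogously for $\mathbf{P}(m)_{<0}$.

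For odd $m$, write $n = m+1$ (even) and consider the involution $\theta\mapsto \pi-\theta$ of $[0,\pi]$. It preserves $\mu_{\mathrm{ST}} = \frac{2}{\pi}\sin^2\theta\,d\theta$, and since $n$ is even, $\sin(n(\pi-\theta)) = -\sin(n\theta)$. Hence this measure-preserving map interchanges the positive and negative loci, forcing each to have $\mu_{\mathrm{ST}}$-measure $\tfrac{1}{2}$.

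For even $m$, set $n = m+1$ (odd). The positive locus is the disjoint union of $(n+1)/2$ intervals $(2j\pi/n,\,(2j+1)\pi/n)$, $0\le j\le (n-1)/2$. Using $\mu_{\mathrm{ST}}((a,b)) = (b-a)/\pi - (\sin 2b - \sin 2a)/(2\pi)$, the length contributions sum to $(n+1)/(2n) = (m+2)/(2(m+1))$, and the remaining piece equals $-\frac{1}{2\pi}S_n$, where
\begin{equation*}
S_n \;=\; \sum_{j=0}^{(n-1)/2}\bigl[\sin((2j+1)\omega) - \sin(2j\omega)\bigr], \qquad \omega = \tfrac{2\pi}{n}.
\end{equation*}
Evaluating $S_n$ in closed form is the main computational obstacle; applying the sum-to-product identity $\sin((2j+1)\omega)-\sin(2j\omega) = 2\sin(\omega/2)\cos((4j+1)\omega/2)$ and then the standard arithmetic-progression cosine sum reduces $S_n$ to $2\sin(\pi/n)\cdot\frac{1}{2\cos(\pi/n)} = \tan(\pi/n)$. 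Substituting back yields the stated formula for $d(\mathbf{P}(m)_{>0})$, and the formula for $d(\mathbf{P}(m)_{<0})$ follows by complementation, as the zero locus has density zero.

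The ``in particular'' assertion is then immediate in both parities, since each density is strictly positive (for $m$ even, $\tfrac{1}{2\pi}\tan(\pi/(m+1))$ is bounded away from $(m+2)/(2(m+1))$), so $\{C(\mfp^m,\f)\}_{\mfp\in\mathbf{P}}$ attains each sign on a set of positive natural density and in particular changes sign infinitely often.
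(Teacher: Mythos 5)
Your proposal is correct and follows essentially the same route as the paper: both reduce, via the lemma giving $\beta(\mfp^m,\f)=\sin((m+1)\theta_\mfp(\f))/\sin\theta_\mfp(\f)$ and the Sato--Tate equidistribution theorem, to computing $\mu_{\mathrm{ST}}$ of the positivity locus of $\sin((m+1)\theta)$ on $[0,\pi]$. The only difference is that the paper defers the explicit measure computation to the cited reference \cite{MKV18}, whereas you carry it out (your sum-to-product evaluation of $S_n=\tan(\pi/(m+1))$ checks out, and the involution $\theta\mapsto\pi-\theta$ for odd $m$ is a clean shortcut), so your write-up is, if anything, more self-contained.
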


\begin{proof}
By Theorem~\ref{5.5}, the natural density of 
$T= \{\mfp \in \mathbf{P} \mid \theta_\mfp(\f) = 0, \pi \} \cup \{\mfp \in \mathbf{P} \mid \mfp \mid \mfc \Dif_F\}$ 
is zero. By~\eqref{normalized}, we have the following equality
$$\mathbf{P}(m)_{\gtrless 0} = \{ \mfp \in \mathbf{P} \mid \mfp \not \in T, \sin((m+1)\theta_\mfp(\f)) \gtrless 0\}.$$  
If $m\equiv 0 \pmod 2$, then
$$ \sin ((m+1)\theta_\mfp(\f)) >0 \Leftrightarrow \theta_\mfp(\f) \in  S:= \bigcup_{j= 0}^\frac{m}{2} \Bigg(\frac{2j\pi}{m+1}, \frac{(2j+1)\pi}{m+1} \Bigg),$$ 
and
$$ \sin ((m+1)\theta_\mfp(\f)) <0 \Leftrightarrow \theta_\mfp(\f) \in  \bigcup_{j= 1}^\frac{m}{2} \Bigg(\frac{(2j-1)\pi}{m+1}, \frac{2j\pi}{m+1} \Bigg).$$ 
By Theorem~\ref{5.5}, the density  of $\mathbf{P}(m)_{>0}$ exists and $d(\mathbf{P}(m)_{>0}) = \mu_{\mathrm{ST}}(S)$, where
$\mu_{\mathrm{ST}}(S) = \frac{2}{\pi}\int_S {\sin}^2tdt$.
 The explicit calculation of
$\mu_{\mrm{ST}}(S)$ is exactly the same as that of~\cite[Theorem 1.1]{MKV18}. 
Again by Theorem~\ref{5.5}, we see that the natural density of $\{\mfp \in \mathbf{P} \mid \mfp\nmid \mfc \Dif_F, C(\mfp^m,\f)=0\}$ 
is $0$, hence we have $$d(\mathbf{P}(m)_{<0}) = 1- d(\mathbf{P}(m)_{>0}).$$
In the case of $m \equiv 1 \pmod 2$, a similar calculation in \textit{loc.cit.} works as well.
\end{proof}

\subsection{Simultaneous sign changes}
 In~\cite[Theorem 3.1]{KK}, the authors proved that, if $C(\mcO_F,\f)C(\mcO_F,\g) \neq 0$, then there exists infinitely many integral ideals such that the product of the Fourier coefficients 
 of $\f$ and $\g$ is positive (resp., negative). Now, we shall state the main theorem this section.
\begin{thm}
    \label{maintheorem}
   Let $\f$ and $\g$ be non-zero Hilbert cusp forms over $F$ of level $\mfc$, trivial character and different integral weights $k$, $l$,
   respectively. 
   Assume that for every ideal $\mfn \subseteq \mcO_F$, there exists an ideal $\mathfrak{r}  \subseteq \mcO_F$ such that $(\mfn,\mathfrak{r})=1$ such that $C(\mathfrak{r},\f)C(\mathfrak{r},\g) \neq 0$.
      Then there exist infinitely many ideals $\mfm \subseteq \mcO_F$ such that $C(\mfm,\f) C(\mfm,\g)>0$ and infinitely many ideals $\mfm \subseteq \mcO_F$ such that $C(\mfm,\f)C(\mfm,\g)<0$.
  \end{thm}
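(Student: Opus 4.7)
The plan is to argue by contradiction using Landau's theorem for Dirichlet series with eventually one-signed coefficients, combined with the analytic continuation of the Rankin--Selberg convolution $L(s,\f\otimes\g)$.

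First, I would suppose $C(\mfm,\f)\,C(\mfm,\g) \ge 0$ for all but finitely many ideals $\mfm$ (the case $\le 0$ is symmetric). Let $\mfn_0$ be the product of the prime ideals in $\mfc\Dif_F$ together with those dividing the finitely many exceptional $\mfm$. By the hypothesis of the theorem applied to $\mfn = \mfn_0$, there exists $\mfr$ with $(\mfr,\mfn_0)=1$ and $C(\mfr,\f)\,C(\mfr,\g) \neq 0$, so that the restricted Dirichlet series
\[
D(s) \;:=\; \sum_{(\mfm,\mfn_0)=1} \frac{C(\mfm,\f)\,C(\mfm,\g)}{N(\mfm)^s}
\]
has genuinely non-negative coefficients and is not identically zero.

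Next, I would invoke Shimura's newform theory to decompose $\f$ and $\g$ into Hecke eigenforms $\f_i, \g_j$, and then interpret $D(s)$, up to finitely many local Euler factors at primes of $\mfn_0$ and an explicit zeta factor, as a finite linear combination of Rankin--Selberg convolutions $L(s,\f_i \otimes \g_j)$. Because $k \ne l$ forces each pair $(\f_i,\g_j)$ to consist of primitive forms of different weights, $\f_i \neq \g_j$, and the Rankin--Selberg integral pole at the Eisenstein point is killed by the vanishing of the inner product of forms with mismatched weights. Hence each $L(s,\f_i \otimes \g_j)$ is entire, and $D(s)$ admits meromorphic continuation whose first real singularity lies strictly to the left of its abscissa of convergence $\sigma_c$ (the only candidate poles come from trivial zeros of the auxiliary zeta factor, which lie deep in the left half-plane).

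Landau's theorem then delivers the contradiction. Deligne's bound together with the Rankin--Selberg asymptotic $\sum_{N(\mfm)\le X} C(\mfm,\f)^2 \asymp X^{2k_0}$ (and the analogue for $\g$) shows that $\sigma_c$ is finite, so $D(s)$ cannot converge on all of $\C$. Landau's theorem therefore forces $\sigma_c$ itself to be a real singularity of the analytic continuation, contradicting the holomorphicity established above. Thus $C(\mfm,\f)\,C(\mfm,\g)$ must take both positive and negative values infinitely often.

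The main technical hurdle will be the analytic continuation step in the Hilbert setting: verifying that for primitive Hilbert cusp forms of different weights the Rankin--Selberg $L$-function is entire, that the decomposition of $\f,\g$ into eigenforms preserves this entirety, and that the auxiliary zeta factor relating $D(s)$ to the $L(s,\f_i\otimes\g_j)$ places its first real singularity strictly to the left of $\sigma_c$. Bookkeeping the local Euler factors at primes in $\mfc\Dif_F$, and passing carefully between the adelic and classical descriptions arising from the decomposition \eqref{eqn:decomp}, will be the main burden.
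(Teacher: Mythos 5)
Your setup --- arguing by contradiction, restricting to ideals coprime to a fixed $\mfn_0$ so that the coefficients $C(\mfm,\f)C(\mfm,\g)$ become genuinely non-negative, using the hypothesis to make the restricted series non-trivial, and deriving entirety of the (completed) Rankin--Selberg convolution from the fact that the weights differ --- is exactly the paper's strategy; the paper realizes the restricted series via auxiliary cusp forms built with the $U(\mfq)$-operator (Proposition~\ref{Key-Proposition}) rather than by decomposing into eigenforms and stripping Euler factors, but that difference is cosmetic. The genuine gap is in your endgame. To get a contradiction from Landau's theorem you must rule out the alternative that $D(s)$ converges on all of $\C$, i.e.\ you must prove $\sigma_c>-\infty$, and the tools you cite do not do this. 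Deligne's bound and the second-moment asymptotics $\sum_{N(\mfm)\le X}C(\mfm,\f)^2\asymp X^{2k_0}$ and $\sum_{N(\mfm)\le X}C(\mfm,\g)^2\asymp X^{2l_0}$ only give, via Cauchy--Schwarz, an \emph{upper} bound on $\sum_{N(\mfm)\le X}|C(\mfm,\f)C(\mfm,\g)|$, hence $\sigma_c<+\infty$; they give no lower bound on the partial sums of the cross-product. A priori the nonzero values of $C(\mfm,\f)C(\mfm,\g)$ on ideals coprime to $\mfn_0$ (of which the hypothesis guarantees infinitely many, but nothing more) could decay faster than any power of $N(\mfm)$, in which case $D(s)$ would be entire with $\sigma_c=-\infty$ and Landau would yield no contradiction. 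Since $\f$ and $\g$ are arbitrary cusp forms rather than eigenforms, and since every convolution $L(s,\f_i\otimes\g_j)$ occurring in your decomposition is entire (there is no diagonal pole to exploit, precisely because $k\ne l$), I do not see how to close this with the ingredients you list.

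The paper avoids this difficulty by running Landau in the opposite direction: since the Dirichlet series has non-negative coefficients and extends to an entire function, Landau's theorem forces it to converge \emph{everywhere}, and the contradiction is then extracted from Pribitkin's theorem (Theorem~\ref{lem1}): a non-trivial general Dirichlet series that converges somewhere, is holomorphic on the whole real line, and has infinitely many real zeros must have infinitely many positive and infinitely many negative coefficients. The infinitely many real zeros come for free from the $\Gamma$-factors in Shimura's completed $L$-function $\Lambda_{\f_1,\g_1}$: that function is entire while the $\Gamma$-factors have poles at infinitely many real points, so $L_{\f_1,\g_1}$ must vanish there. You should replace your Landau endgame with this (or an equivalent) sign-change mechanism; the rest of your outline is sound, modulo the bookkeeping you already flag.
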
 
\begin{remark}
In the above theorem, the condition of simultaneous non-vanishing of Fourier coefficients is required only to ensure that 
the $L$-function in~\eqref{positive:cofficients} is non-zero, otherwise there is no other reason for this assumption.
\end{remark}

The main idea in the proof of Theorem~\ref{maintheorem} comes from~\cite[Theorem 1.5]{KM18}, which mainly uses the following theorem of Pribitkin~\cite{Pri08}.
  \begin{thm}
   \label{lem1}
     Let $F(s) = \sum_{n=1}^{\infty} a_n e^{-s\lambda_n}$ be a non-trivial general Dirichlet series which converges somewhere, where the  sequence
     $\{a_n\}_{n=1}^{\infty}$ is  complex and  the  exponent sequence $\{\lambda_n\}_{n=1}^{\infty}$ is real and strictly increasing to $\infty$.
     If the function $F$ is holomorphic on the whole real line and has infinitely many real zeros, then there exist infinitely many
     $n \in \N$ such that $a_n > 0$ (resp., $a_n < 0$).
  \end{thm}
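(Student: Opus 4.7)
The plan is to apply Pribitkin's theorem (Theorem~\ref{lem1}) to a Rankin--Selberg-type Dirichlet series built from the product coefficients $C(\mfm,\f)\,C(\mfm,\g)$. First, I would form
\[
D(s) := \sum_{\mfm \subseteq \mcO_F} \frac{C(\mfm,\f)\, C(\mfm,\g)}{N(\mfm)^s},
\]
and regroup ideals of equal norm to view this as a general Dirichlet series $\sum_n b_n e^{-s\log n}$ with real coefficients $b_n = \sum_{N(\mfm)=n} C(\mfm,\f)C(\mfm,\g)$ and strictly increasing exponents $\lambda_n=\log n$. Deligne's bound gives convergence in a right half-plane. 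The coprimality hypothesis is used precisely here: it rules out total cancellation among ideals of a given norm and forces $D(s)$ to be non-trivial as a Dirichlet series; this matches the only role of the assumption as noted in the remark.

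Next, I would invoke the Rankin--Selberg theory for Hilbert cusp forms to meromorphically continue $D(s)$ to $\C$. Schematically, $D(s)$ equals $L(s,\f\otimes\g)$ divided by a Dedekind zeta factor $\zeta_F(2s+c)$ for an explicit shift $c$ depending on $k_0,l_0$. Because the weights $k$ and $l$ differ, the Rankin--Selberg $L$-function $L(s,\f\otimes\g)$ is entire (no residue at the point of coincidence that would arise if $\g$ were the contragredient of $\f$), while $\zeta_F$ has only the simple pole at $s=1$, so $1/\zeta_F(2s+c)$ is holomorphic on $\R$ away from at most one removable point. Consequently $D(s)$ extends holomorphically across the entire real line, securing the first hypothesis of Theorem~\ref{lem1}.

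To produce the infinitely many real zeros demanded by Pribitkin's theorem, I would exploit the archimedean part of the functional equation. The completed $L$-function $\Lambda(s,\f\otimes\g)$ is entire and carries a product of Gamma factors, one for each of the $n=[F:\Q]$ real places; these Gamma factors have poles along arithmetic progressions of non-positive real numbers, and entireness of $\Lambda$ forces $L(s,\f\otimes\g)$ to vanish at all those points. Dividing by $\zeta_F(2s+c)$ can cancel at most finitely many such trivial zeros in any bounded vertical strip, so $D(s)$ inherits infinitely many real zeros. Applying Theorem~\ref{lem1} to $D(s)$ then yields infinitely many $n$ with $b_n>0$ and infinitely many with $b_n<0$, which translates immediately into the existence of infinitely many ideals $\mfm$ with $C(\mfm,\f)C(\mfm,\g)>0$ and infinitely many with $C(\mfm,\f)C(\mfm,\g)<0$.

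The main obstacle will be the analytic bookkeeping in the middle two steps: identifying the exact shift $c$, verifying that no real pole of $1/\zeta_F(2s+c)$ spoils holomorphy of $D(s)$ on $\R$, and confirming that the trivial zeros coming from the $n$ archimedean Gamma factors are not all cancelled by zeros of $\zeta_F(2s+c)$ so that genuinely infinitely many of them persist in the quotient. These checks are explicit but require care in the Hilbert setting, where one has a product of $n$ archimedean $\Gamma$-factors rather than the single one over $\Q$ treated in~\cite{KM18}; once they are carried out, the application of Theorem~\ref{lem1} is essentially formal.
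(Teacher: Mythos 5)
Your proposal does not prove the statement at hand. Theorem~\ref{lem1} is Pribitkin's result about an abstract general Dirichlet series $F(s)=\sum_{n}a_ne^{-s\lambda_n}$: from holomorphy of $F$ on the whole real line together with infinitely many real zeros, one must \emph{deduce} that infinitely many $a_n$ are positive and infinitely many are negative. What you have written is instead an outline of the proof of Theorem~\ref{maintheorem} (the simultaneous sign-change result for two Hilbert cusp forms), and your very first step is to \emph{apply} Theorem~\ref{lem1} to a Rankin--Selberg series. You are thus assuming the statement you were asked to prove and establishing a downstream application of it. In the paper itself Theorem~\ref{lem1} is simply quoted from Pribitkin~\cite{Pri08} without proof, and the Rankin--Selberg continuation, the $\Gamma$-factor zeros, and the coprimality device you describe constitute (essentially) the paper's proof of Theorem~\ref{maintheorem}, not of Theorem~\ref{lem1}.

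If you want to actually prove Theorem~\ref{lem1}, the key ingredient is Landau's theorem for general Dirichlet series with eventually one-signed coefficients, an ingredient entirely absent from your write-up. Suppose for contradiction that only finitely many $a_n$ are negative (the other case is symmetric). Removing those finitely many terms leaves a non-trivial series with non-negative coefficients which still converges somewhere and is still holomorphic on the real line; by Landau, such a series either converges everywhere or has a singularity at the real point of its abscissa of convergence, and holomorphy on $\R$ forces the former. But a series with eventually non-negative coefficients that converges everywhere satisfies $F(\sigma)e^{\sigma\lambda_{n_0}}\to a_{n_0}\neq 0$ as $\sigma\to-\infty$, where $n_0$ is the first index with $a_{n_0}\neq 0$, so $F$ is of constant sign on a left half-line of $\R$; combined with real-analyticity this contradicts the existence of infinitely many real zeros (which must then accumulate in a compact set, forcing $F\equiv 0$). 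That is the argument the statement requires.
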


The following proposition is a melange of~\cite[Proposition 2.3]{Shi78} and~\cite[Page 124]{Pan91}. 
\begin{prop}
For any integral ideal $\mfq\subseteq \mcO_F$ and every $\f\in S_k(\mfc, \psi)$, there exists a unique element $\f|\mfq$ of $S_k(\mfq\mfc, \psi)$
such that $$C(\mfm,\f|\mfq)=C(\mfq^{-1}\mfm,\f),$$ and there exists an unique element $\f|U(\mfq)$ of $S_k(\mfq\mfc,\psi)$
such that $$C(\mfm,\f|U(\mfq))=C(\mfq\mfm,\f).$$
\end{prop}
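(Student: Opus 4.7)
The approach is to construct $\f|\mfq$ and $\f|U(\mfq)$ as explicit adelic right-translates of $\f$ by specific matrices, verify that the resulting functions lie in $S_k(\mfq\mfc,\psi)$, and then read off the Fourier coefficients by comparing with~\eqref{eqn:coeff}. Uniqueness will follow immediately from the $q$-expansion principle for Hilbert cusp forms.

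First, I would fix a finite idele $y\in\A_F^\times$ with $y\mcO_F=\mfq$ and $y_\infty=1$, and set $\eta_\mfq:=\psmat{y}{0}{0}{1}$. I define $(\f|\mfq)(g):=\f(g\eta_\mfq)$; for the second operator, I take $(\f|U(\mfq))(g):=\sum_u\f(g\xi_u)$, where $\xi_u:=\psmat{y^{-1}}{u}{0}{1}$ and $u$ ranges over a set of representatives for $\mcO_F/\mfq$ embedded into the finite adeles in the top-right slot. A suitable power of $N(\mfq)$ is inserted as a normalising factor so that the Fourier coefficient identities come out with the expected exponent.

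Second, I would check that these functions transform under $K_0(\mfq\mfc)$ by the character $\psi$. For any $\gamma\in K_0(\mfq\mfc)$, the lower-left entry of $\gamma$ lies in $(\mfq\mfc)_\mfp\Dif_\mfp$ at each finite place $\mfp$, so a direct matrix computation shows $\eta_\mfq\gamma\eta_\mfq^{-1}\in K_0(\mfc)$. The transformation law for $\f$ then transfers verbatim to $\f|\mfq$, and cuspidality is preserved because right translation produces no new constant terms. A parallel argument, in which one verifies that the family $\{\xi_u\}$ is stable under right multiplication by $K_0(\mfq\mfc)$ up to permutation of $u$ and an element of $K_0(\mfc)$, handles $\f|U(\mfq)$.

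Third, I would compute the Fourier coefficients via the disjoint decomposition~\eqref{eqn:decomp}. Writing $\mfm=\xi t_\nu^{-1}\mcO_F$ with $\xi$ totally positive, the right translation by $\eta_\mfq$ exchanges the narrow class index $\nu$ for a new index $\nu'$ with $t_{\nu'}\mcO_F$ representing the narrow class of $\mfq^{-1}t_\nu^{-1}\mcO_F$, and scales the upper-half-plane argument by the archimedean component (which is trivial by our choice $y_\infty=1$). Unwinding~\eqref{eqn:coeff} gives the identity $C(\mfm,\f|\mfq)=C(\mfq^{-1}\mfm,\f)$, which is automatically zero whenever $\mfq\nmid\mfm$. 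For $\f|U(\mfq)$, the additional sum over $u$ contributes the exponential sum $\sum_{u\in\mcO_F/\mfq}e(u\xi)$, which vanishes by orthogonality unless $\xi$ is divisible by $\mfq$ in the appropriate sense, yielding $C(\mfm,\f|U(\mfq))=C(\mfq\mfm,\f)$. The main technical obstacle is precisely this bookkeeping step: converting the adelic right translation into the language of the components $\f=(f_1,\dots,f_h)$ requires careful tracking of the narrow class representatives $\{t_\nu\}$, the different $\Dif_F$, and the normalising factors $N(\mfm)^{k_0/2}$ and $\xi^{-(k+i\mu)/2}$ appearing in~\eqref{eqn:coeff}. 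These manipulations are formal but tedious, which is presumably why the paper defers the details to the cited references.
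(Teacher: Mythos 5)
Your outline is correct and matches the paper's (implicit) argument: the paper gives no proof of this proposition at all, deferring entirely to \cite[Proposition 2.3]{Shi78} and \cite[Page 124]{Pan91}, and the construction you describe --- right translation by a diagonal matrix built from a finite idele generating $\mfq$ for $\f|\mfq$, a normalised sum over unipotent representatives for $\mcO_F/\mfq$ with orthogonality of additive characters for $\f|U(\mfq)$, and uniqueness from the determination of a cusp form by its Fourier coefficients --- is precisely the standard one carried out in those references. The only caution is the usual convention-dependent bookkeeping (which diagonal slot carries the idele, and the direction of the conjugation $\eta_\mfq^{-1}K_0(\mfq\mfc)\eta_\mfq\subseteq K_0(\mfc)$ versus its inverse), which you have already flagged as the tedious part.
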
 
Before we proceed to prove Theorem~\ref{maintheorem}, we need the following proposition to construct new Hilbert modular forms out of the existing modular
form with some prescribed vanishing of Fourier coefficients at certain ideals (cf.~\cite[Proposition 4.5]{KK} for a proof).
\begin{prop}
\label{Key-Proposition}
Let $\f\in S_k(\mfc,\psi)$ and $\mfq$  be an integral ideal of $\mcO_F$.
Then $\g=\f-(\f|U(\mfq))|\mfq$ is a Hilbert cusp form of weight $k$ and level $\mfq^2\mfc$.
Further, it has the property that $C(\mfm\mfq,\g)=0$ and $C(\mfm,\g)=C(\mfm,\f)$, if $(\mfm,\mfq)=1$. 
\end{prop}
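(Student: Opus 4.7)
The plan is to leverage the preceding proposition on the operators $|\mfq$ and $|U(\mfq)$ twice: once to pin down the space in which $\g$ lies, and once to compute its Fourier coefficients.

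For the level claim, I would first apply the preceding proposition to $\f \in S_k(\mfc, \psi)$ to obtain $\f|U(\mfq) \in S_k(\mfq\mfc, \psi)$. Applying the same proposition again, now with the shift operator $|\mfq$ to the form $\f|U(\mfq)$, gives $(\f|U(\mfq))|\mfq \in S_k(\mfq \cdot \mfq\mfc, \psi) = S_k(\mfq^2\mfc, \psi)$. Since the conductor of $\psi$ divides $\mfc$, hence $\mfq^2\mfc$, one also has $\f \in S_k(\mfq^2\mfc, \psi)$ by the natural inclusion of level spaces, and therefore $\g = \f - (\f|U(\mfq))|\mfq \in S_k(\mfq^2\mfc, \psi)$.

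For the coefficient claim, the argument is a direct bookkeeping exercise using the two Fourier-coefficient formulas from the preceding proposition. Writing
\begin{equation*}
C(\mfm, (\f|U(\mfq))|\mfq) = C(\mfq^{-1}\mfm, \f|U(\mfq)),
\end{equation*}
I would split into two cases. If $\mfq \nmid \mfm$ (which in particular covers $(\mfm, \mfq) = 1$), then $\mfq^{-1}\mfm$ is not an integral ideal, so the right-hand side vanishes; hence $C(\mfm, \g) = C(\mfm, \f)$. If instead $\mfq \mid \mfm$, write $\mfm = \mfq \mathfrak{s}$ for an integral ideal $\mathfrak{s}$; the $U(\mfq)$-formula then yields $C(\mathfrak{s}, \f|U(\mfq)) = C(\mfq\mathfrak{s}, \f) = C(\mfm, \f)$, so $C(\mfm, \g) = C(\mfm, \f) - C(\mfm, \f) = 0$. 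Specializing this to $\mfm$ replaced by $\mfm\mfq$ yields $C(\mfm\mfq, \g) = 0$ for every integral ideal $\mfm$, in particular whenever $(\mfm, \mfq) = 1$.

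There is no substantive obstacle here: once the action of the operators $|\mfq$ and $|U(\mfq)$ on Fourier expansions is recorded (which is precisely the content of the preceding proposition), both assertions fall out at once. The only care needed is to keep the divisibility conditions on $\mfm$ versus $\mfq$ straight when evaluating $C(\mfq^{-1}\mfm, \cdot)$, and to observe that raising the level from $\mfc$ to $\mfq^2\mfc$ is harmless for the character $\psi$.
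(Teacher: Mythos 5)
Your argument is correct. Note that the paper itself does not supply a proof of this proposition; it only cites~\cite[Proposition 4.5]{KK}, so there is nothing internal to compare against. What you have written is the natural derivation from the preceding proposition: the level statement follows from applying the two operators in succession ($\f|U(\mfq)\in S_k(\mfq\mfc,\psi)$, then $(\f|U(\mfq))|\mfq\in S_k(\mfq^2\mfc,\psi)$, together with the inclusion $S_k(\mfc,\psi)\subseteq S_k(\mfq^2\mfc,\psi)$), and the coefficient statement is exactly the case split on whether $\mfq\mid\mfm$, using the convention from~\eqref{eqn:coeff} that $C(\mfa,\f)=0$ when $\mfa$ is not integral to kill $C(\mfq^{-1}\mfm,\f|U(\mfq))$ in the coprime case. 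You in fact prove slightly more than is claimed, namely $C(\mfm,\g)=C(\mfm,\f)$ whenever $\mfq\nmid\mfm$ and $C(\mfm,\g)=0$ whenever $\mfq\mid\mfm$, which subsumes both assertions of the proposition.
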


%
Now, we are ready to prove Theorem~\ref{maintheorem}.
 \begin{proof}
         First, we shall show that there exist infinitely many $\mfm\subseteq\mcO_F$ such that 
 	\begin{equation}
 	   \label{negative}
 	C(\mfm,\f)C(\mfm,\g)<0.
 	\end{equation}
 	A similar proof works for the other case as well, by replacing $\f$ by $-\f$. If~\eqref{negative} is not true, then there exist an ideal $\mfm^\prime\subseteq\mcO_F$ such that 
 	\begin{equation}\label{positive}
 	C(\mfm,\f)C(\mfm,\g)\geq 0
 	\end{equation}
 	for all $\mfm\subseteq\mcO_F$ with $N(\mfm)\geq N(\mfm^\prime)$. 
 	Set $\mfn:=\prod_{N(\mfp)\leq N(\mfm^\prime)} \mfp$, where $\mfp$ are prime ideals of $\mcO_F$. 
 	
 	Suppose $\f_1$ and $\g_1$ are Hilbert modular cusp forms obtained from $\f$ and $\g$ respectively,
 	by applying the Proposition~\ref{Key-Proposition} to $\f$ and $\g$ with the ideal $\mfn$. 
 	Clearly, $\f_1$ and $\g_1$ are also Hilbert cusp forms of  level $k$ and $l$ respectively, and of level  $\mfc_1= \mfq^2\mfc$.
 	 	For $s\in\C$ with $\mathrm{Re}(s)\gg 1$, the Rankin-Selberg $L$-function of $\f_1$ and $\g_1$ is defined by 
 	\begin{equation}\label{positive:cofficients}
 	R_{\f_1,\g_1}(s):=\sum_{\mfm\subseteq\mcO_F,(\mfm,\mfn)=1} \frac{C(\mfm,\f)C(\mfm,\g)}{N(\mfm)^s}.
 	\end{equation}
 	In above summation $C(\mfm,\f)C(\mfm,\g)\geq 0$, since, if $N(\mfm)\leq N(\mfm^\prime)$ then $\mfm=\prod_{\mfp_i|\mfn}\mfp_i^{e_i}$ implies  $(\mfm,\mfn)\neq1$. 
        The Rankin-Selberg $L$-function $R_{f_1,g_1}(s)$ is a non-zero function since there exists $\mfm$ with $(\mfm,\mfn)=1$ such that $C(\mfm,\f)C(\mfm,\g) \neq 0$, by hypothesis.

 	For $\mathrm{Re}(s)\gg 1$, we set 
 	$$L_{\f_1,\g_1}(s):=\zeta_F^{\mfc_1}(2s-(k_0+l_0)+2)R_{\f_1,\g_1}(s),$$
 	where $\zeta_F^{\mfc_1}(s)=\prod_{\mfp|\mfc_1, \mfp:\text{prime}}(1-N(\mfp)^{-s})\zeta_F(s)$, 
 	where $\zeta_F(s)=\sum_{\mfm\subseteq\mcO_F}N(\mfm)^{-s}$ is Dedekind zeta function of $F$. 
 	By the Euler expansion of Dedekind zeta function of $F$,  we get that
   \begin{align*}
   	\zeta_F^{\mfc_1}(s) =&\prod_{\mfp|\mfc_1, \mfp:\text{prime}}(1-N(\mfp)^{-s})\prod_{\mfp:\text{prime}}(1-N(\mfp)^{-s})^{-1}\\
   	                    &=\sum_{\mfm\subseteq\mcO_F,(\mfm,\mfc_1)=1}\frac{1}{N(\mfm)^{s}}=\sum_{n=1}^{\infty}\frac{a_n(\mfc_1)}{n^s},
   \end{align*}
   where $a_n(\mfc_1)$ is the number of integral ideals of norm $n$ that are co-prime to $\mfc_1$.
 	Hence, we can write
 	$$L_{\f_1,\g_1}(s)=\sum_{n=1}^{\infty}\frac{a_n(\mfc_1)n^{k_0+l_0-2}}{n^{2s}}\sum_{\mfm\subseteq\mcO_F,(\mfm,\mfn)=1} \frac{C(\mfm,\f)C(\mfm,\g)}{N(\mfm)^s}.$$
 	Now, we can re-write $$L_{\f_1,\g_1}(s)=\sum_{m=1}^\infty\frac{\mfb_m^{\mfc_1}(\f_1,\g_1)}{m^s}= \sum_{m=1}^{\infty}{\mfb_m^{\mfc_1}(\f_1,\g_1) e^{-s \log m}},$$
 	where $$\mfb_m^{\mfc_1}(\f_1,\g_1)=\sum_{n^2|m}\left(a_n(\mfc_1)n^{k_0+l_0-2}\sum_{(\mfm,\mfn)=1,N(\mfm)=m/n^2}C(\mfm,\f)C(\mfm,\g)\right).$$
        Define, for any $j$, $k_j^{\prime} := k_0 - k_j$, and similarly, define $l_j^{\prime}$. Now, look at the complete $L$-function, defined by the product
 	$$\Lambda_{\f_1,\g_1}(s)= \prod_{j=1}^n \Gamma\left(s+1+ \frac{k_j-l_j-k_0-l_0}{2}\right)\Gamma\left(s-\frac{k^{\prime}_j+l^{\prime}_j}{2}\right)L_{\f_1,\g_1}(s)$$ 
 	can be continued to a holomorphic function on the whole plane, since the weights are different (cf.~\cite[Proposition 4.13]{Shi78}).
 	As the $\Gamma$-function is extended by analytic continuation to all complex numbers except the non-positive integers, where the function has simple poles,
 	we get that that function $L_{\f_1,\g_1}(s)$ is also entire and has infinitely many real zeros because the $\Gamma$-factors have poles at non-positive integers. 
 	
        By Landau's Theorem for Dirichlet series with non-negative coefficients, it follows that the Dirichlet series $L_{\f_1,\g_1}(s)$ converges everywhere.
 	By Theorem~\ref{lem1}, there exist infinitely many $m \in \N$ such that $\mfb_m^{\mfc_1}(\f_1,\g_1) > 0 $ and there exist infinitely many
        $m \in \N$ such that $\mfb_m^{\mfc_1}(\f_1,\g_1) < 0$. This is a contradiction to the fact  $\mfb_m^{\mfc_1}(\f_1,\g_1)\geq 0$ for all $m$ 
        (this is because, by~\eqref{positive}, $C(\mfm,\f)C(\mfm,\g)\geq 0$ for all $(\mfm,\mfn)=1$). This completes the proof of Theorem~\ref{maintheorem}.
\end{proof}

In the following proposition, we compute the natural density of $n\in \N$
such that the product $C(\mfp^n,\f)C(\mfp^n,\g)$ have the same sign (resp., opposite sign).
For primitive forms over $\Q$, this a result due to Amri (cf. ~\cite[Theorem 1.1]{Amr18}).
\begin{prop}
\label{Amri}
Let $\f,\g$ be two distinct non-CM primitive forms over $F$ of levels $\mfc_1,\mfc_2$, with trivial characters, 
and weights $2k,2l$, 
respectively.
For any prime ideal $\mfp \in \mathbf{P}$ with $\mfp \nmid \mfc_1 \mfc_2 \Dif_F$, let $\theta_\mfp(\f), \theta_\mfp(\g) \in [0,\pi]$
be defined as in~\eqref{theta}.
Then, for a natural density $1$ set of primes $\mfp \in \mathbf{P}$, the linear independence of
$1, \frac{\theta_\mfp(\f)}{2\pi}, \frac{\theta_\mfp(\g)}{2\pi}$ over $\mathbb{Q}$ implies 
$$ \lim_{x \to \infty} \frac{\# \{n\leq x : C(\mfp^n,\f)C(\mfp^n,\g)\gtrless 0\} }{x} = \frac{1}{2}. $$
\end{prop}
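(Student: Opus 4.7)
The plan is to reduce the problem to an equidistribution statement on the $2$-torus $[0,1)^2$. First, let $\mathcal{P} \subseteq \mathbf{P}$ be the subset of primes with $\mfp \nmid \mfc_1\mfc_2\Dif_F$ and $\theta_\mfp(\f), \theta_\mfp(\g) \in (0,\pi)$. Since Sato--Tate measure puts mass zero on $\{0,\pi\}$, Theorem~\ref{5.5} applied to each of $\f$ and $\g$ shows that the complement of $\mathcal{P}$ has natural density zero, so $\mathcal{P}$ has natural density $1$; this will be the density one set appearing in the statement.

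Next, fix $\mfp \in \mathcal{P}$ and assume that $1, \alpha, \beta$ are $\Q$-linearly independent, where $\alpha := \theta_\mfp(\f)/(2\pi)$ and $\beta := \theta_\mfp(\g)/(2\pi)$. From the definition $\beta(\mfp^n,\f) = C(\mfp^n,\f)/N(\mfp^n)^{(2k_0-1)/2}$, the analogous formula for $\g$, and the trigonometric evaluation~\eqref{normalized}, together with the positivity of $\sin\theta_\mfp(\f)$ and $\sin\theta_\mfp(\g)$ on $\mathcal{P}$, we obtain
$$\mathrm{sgn}\bigl(C(\mfp^n,\f)\,C(\mfp^n,\g)\bigr) = \mathrm{sgn}\bigl(\sin(2\pi(n+1)\alpha)\,\sin(2\pi(n+1)\beta)\bigr).$$
Thus the question is reduced to computing the density of $n \in \N$ such that $\sin(2\pi(n+1)\alpha)\sin(2\pi(n+1)\beta) \gtrless 0$.

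By Weyl's equidistribution criterion, the $\Q$-linear independence of $1,\alpha,\beta$ implies that the sequence $\{((n+1)\alpha \bmod 1,\, (n+1)\beta \bmod 1)\}_{n \in \N}$ is equidistributed in $[0,1)^2$ with respect to Lebesgue measure. The target sets
$$A^{\pm} := \{(u,v) \in [0,1)^2 : \sin(2\pi u)\sin(2\pi v) \gtrless 0\}$$
are Jordan measurable, since their boundaries consist of finitely many horizontal and vertical segments (all of Lebesgue measure zero), and a direct computation gives
$$A^+ = \bigl((0,\tfrac{1}{2}) \times (0,\tfrac{1}{2})\bigr) \cup \bigl((\tfrac{1}{2},1) \times (\tfrac{1}{2},1)\bigr), \qquad \mathrm{vol}(A^+) = \mathrm{vol}(A^-) = \tfrac{1}{2}.$$
Applying Weyl to the indicators $\1_{A^{\pm}}$ then yields the limits $\tfrac{1}{2}$ in the statement.

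The main obstacle is the standard one: Weyl's theorem is phrased for continuous test functions, whereas $\1_{A^{\pm}}$ is discontinuous. I would deal with this in the usual way, by sandwiching $\1_{A^{\pm}}$ between continuous periodic functions from above and below whose Lebesgue integrals differ by at most $\varepsilon$; this approximation is valid precisely because $\partial A^{\pm}$ has measure zero. A minor side point to verify is that the ``zero'' set $\{n : C(\mfp^n,\f)C(\mfp^n,\g) = 0\}$ does not affect the density: under the $\Q$-linear independence of $1,\alpha,\beta$, neither $(n+1)\alpha$ nor $(n+1)\beta$ can be a half-integer, so the product never vanishes on $\N$ and $\mathbf{P}(n)_{>0} \sqcup \mathbf{P}(n)_{<0}$ exhausts $\N$.
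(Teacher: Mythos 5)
Your argument is correct and follows essentially the same route as the paper: restrict to the density-one set of primes with $\theta_\mfp(\f),\theta_\mfp(\g)\in(0,\pi)$ via Theorem~\ref{5.5}, translate signs through~\eqref{normalized}, and invoke two-dimensional Weyl equidistribution of $(n\alpha,n\beta)$ mod $1$ under the $\Q$-linear independence of $1,\alpha,\beta$. The only difference is that the paper outsources the final torus computation to Amri's Theorem 1.1, whereas you carry it out explicitly (Jordan measurability of $A^{\pm}$, $\mathrm{vol}(A^{\pm})=\tfrac12$, and the non-vanishing of the product), which is a welcome filling-in of the omitted details.
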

\begin{proof}
By Theorem~\ref{5.5}, the natural density $\mfp \in \mathbf{P}$ such that $\theta_\mfp(\f),\theta_\mfp(\g) \in \{0,\pi\}$ is zero. 
Let $\mfp \in \mathbf{P}$ be a prime ideal such that $\theta_\mfp(\f),\theta_\mfp(\g) \in (0,\pi)$.
If $1, \frac{\theta_\mfp(\f)}{2\pi}, \frac{\theta_\mfp(\g)}{2\pi}$ are linearly independent over $\mathbb{Q}$, the sequence 
$\{(n\frac{\theta_\mfp(\f)}{2\pi},n\frac{\theta_\mfp(\g)}{2\pi})\}_{n\in \mathbb{N}}$ is uniformly distributed  $\pmod 1$ in $\R^2$
(cf.~\cite[Theorem 6.3]{KN74}). Now, the rest of the proof is similar to that of~\cite[Theorem 1.1]{Amr18}.
\end{proof}

In the above result, instead of $F$, if we work over $K$, then one can show the same result holds
for all but finitely many primes $\mfp \in \mathbb{P}$, instead of density $1$ set of primes $\mfp \in \mathbf{P}$.
In this case,  we can even drop  the assumption on $\f,\g$ being non-CM.

\bibliographystyle{plain, abbrv}

\end{document}